\newtheorem{theorem}{Theorem}[section]
\newtheorem{lemma}[theorem]{Lemma}
\newtheorem{proposition}[theorem]{Proposition}
\newcounter{mirrorcount}
\newtheorem{mirrorprop}{Theorem}[mirrorcount]
\theoremstyle{definition}
\theoremstyle{remark}
\newtheorem{remark}[theorem]{Remark}
\theoremstyle{remark}
\newcommand\nc\newcommand
\newcommand\Zmodp{\mathbb{Z}/p\mathbb Z}
\newcommand\Zmodq{\mathbb{Z}/q\mathbb Z}
\newcommand\mapq{\phi_q}
\newcommand\Mapq{\Phi_q}
\newcommand\quotientmap{\phi_p}
\newcommand\qmap{\quotientmap}
\newcommand\Qmap{\Phi_p}
\nc\Zr{\Z[y_1,\ldots,y_{j+1}]}
\nc\reason[1]{\mbox{\parbox{.25\linewidth}{(#1)}}}
\nc\smlreason[2]{\mbox{\parbox{#1}{\small (#2)}}}
\nc\npreason[1]{\mbox{(#1)}}
\newenvironment{romanlist}%
    {\begin{list}{(\roman{enumi})\ }{\usecounter{enumi}}}%
    {\end{list}}
\newcommand\abs[1]{\left|#1\right|}
\newcommand\parens[1]{\left(#1\right)}
\newcommand\cbraces[1]{\left\{#1\right\}}
\newcommand\angles[1]{\left\langle #1 \right\rangle}
\newcommand\cross{\times}
\newcommand\isom{\simeq}
\newcommand{\C}{\mathbb{C}}
\newcommand{\Z}{\mathbb{Z}}
\newcommand{\Q}{\mathbb{Q}}
\newcommand{\I}{\mathbf{I}}
\newcommand\Qbar{\overline{\Q}}
\newcommand\e[1]{\begin{align*} #1
\end{align*}}
\newcommand\en[1]{\begin{align} #1
\end{align}}
\newcommand\mc\mathcal
\newcommand\fk\mathfrak
\nc\czid{characteristic zero integral domain}
\nc\SL{\operatorname{SL}}
\nc\SLt{\operatorname{SL_2}}
\nc\SLtp{\operatorname{SL}_2(\Zmodp)}
\nc\uA{\underline{A}}
\nc\uG{\underline{G}}
\nc\uN{\underline{N}}
\title{Mapping Incidences}
\author{Van H. Vu, Melanie Matchett Wood, and Philip Matchett Wood 
%\footnotetext%
\footnote
{\emph{Mathematics Subject Classification}: 05B25 (primary), 11T99 (secondary).} 
\footnote{The first author is supported by research grants DMS-0901216 and
AFOSAR-FA-9550-09-1-0167.  The second and third authors were supported by
National Defense Science and Engineering Graduate Fellowships and by National
Science Foundation Graduate Research Fellowships.}
}
\renewcommand{\footnotemark}{}
\author{
\parbox{6in}{
    \begin{center}
    Van H. Vu%\thanks{Van Vu is supported by research grants DMS-0901216 and AFOSAR-FA-9550-09-1-0167.} 
	\\
    \small Department of Mathematics, Rutgers University, Piscataway, NJ
08854, USA
    \\ \texttt{vanvu@math.rutgers.edu}
    \end{center}
}
\\
\parbox{6in}{
    \begin{center}
    Melanie Matchett Wood \\
    \small American Institute of Mathematics, 360 Portage Ave, Palo Alto, CA
94304, USA \\
    Stanford University, Dept of Mathematics, Building 380, Sloan Hall,
Stanford, CA 94305, USA 
    \\ \texttt{mwood@math.stanford.edu}
    \end{center}
}
\\
\parbox{6in}{
    \begin{center}
    Philip Matchett Wood \\
    \small 
    Stanford University, Dept of Mathematics, Building 380, Sloan Hall,
Stanford, CA 94305, USA 
    \\ \texttt{pmwood@math.stanford.edu}
    \end{center}
}
%\footnotetext%
\footnote
{\emph{Mathematics Subject Classification}: 05B25 (primary), 11T99 (secondary).} 
\footnote{The first author is supported by research grants DMS-0901216 and
AFOSAR-FA-9550-09-1-0167.  The second and third authors were supported by
National Defense Science and Engineering Graduate Fellowships and by National
Science Foundation Graduate Research Fellowships.}
}
\begin{document}
\maketitle

\begin{abstract}
We show that  any finite set $S$ in a \czid\ can be mapped to
$\Zmodp$, for infinitely many primes $p$, preserving all algebraic
incidences in $S$. This can be seen as a generalization of the
well-known Freiman isomorphism lemma, which asserts that any finite subset
of a torsion-free group can be mapped into $\Zmodp$, preserving all
linear incidences.

As applications, we derive several combinatorial results (such as
sum-product estimates) for a finite set in a \czid. As $\bb C$ is a
\czid, this allows us to obtain new proofs for some recent results
concerning finite sets of complex numbers, without relying on the
topology of the plane.
\end{abstract}

\section {Introduction}

Many problems and results in arithmetic combinatorics deal with
algebraic incidences in a finite set $S$. Classical examples are
the Szemer\' edi-Trotter theorem, and sum-product estimates.

A  well-studied situation is when $S$ is a subset of $\Zmodp$, the
finite field with $p$ elements where $p$ is a large prime. In this
case, the special structure of the field and  powerful techniques
such as discrete Fourier analysis provide many tools to attack these
problems. These  features are not available in other settings and it
seems one needs to invent new tricks. For example, when $S$ is a
subset of the complex numbers, most
 studies previous to this paper relied on  some very clever use of properties of the
plane. Thus, it seems desirable to have a  tool that reduces a
problem from a general setting to the special case of $\Zmodp$.

Such a tool exists, if one only cares about the linear relations
among the elements of $S$. In this case,  the famous Freiman
isomorphism lemma (see, for example, \cite[Lemma 5.25]{TVbook})
asserts that any finite subset of an arbitrary torsion-free group
can be mapped into $\Zmodp$, given that $p$ is sufficiently large,
preserving all additive (linear) relations in $S$. Thanks to this
result, it has now become a  common practice in additive
combinatorics to translate additive problems in general torsion-free
groups to corresponding problems in $\Zmodp$.

The goal of this paper is to show that the desired reduction is
possible in general. Technically speaking, we prove that any finite
set $S$ in a \czid\ can be mapped to $\Zmodp$, for infinitely many
primes $p$, preserving all algebraic incidences in $S$.

Some notable \czid s include the integers, the complex numbers, and
the field of rational functions $\bb C(t_1,t_2,\ldots)$ in any
number of formal variables $t_i$. As applications, we obtain some
new  results and short proofs of some known results. In particular,
it is shown that sum-product estimates and bounds for incidence
geometry problems over $\Zmodp$ imply the same bounds for the
analogous problems over any \czid\ (including the real and complex
numbers).

Throughout this paper, we assume that all rings are commutative with identity
$1$ and that all ring homomorphisms take $1$ to $1$.  Let $D$ be a
characteristic zero integral domain (so $D$ is a commutative ring with
identity that has no zero divisors).  We will identify the subring of $D$
generated by the identity with the integers $\Z$ (since the two are
isomorphic).  For a subset $S$ of $D$, we will use $\bb Z[S]$ to denote the
smallest subring of $D$ containing $S$.

\begin{theorem}%[Reduction Theorem] 
\label{redu2}  Let $S$ be a finite subset of
a characteristic zero integral domain $D$, and let $L$ be a finite set of
non-zero elements in the subring $\Z[S]$ of $D$.  There exists an infinite
sequence of primes with positive relative density such that for each prime $p$
in the sequence, there is a ring homomorphism $\quotientmap: \Z[S]\to\Zmodp$
satisfying $0\notin \quotientmap(L)$.
\end{theorem}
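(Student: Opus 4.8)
The plan is to recast the problem as finding an $\Zmodp$-point on the reduction mod $p$ of $\operatorname{Spec}\Z[S]$ (after inverting $L$), to cut the ring down to an ``arithmetic curve'' whose generic fibre is a single closed point, and then to invoke the Chebotarev density theorem to control which primes occur.

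First I would absorb the nonvanishing condition into the ring. Note $\Z[S]$ is a finitely generated $\Z$-algebra (it is the image of a polynomial ring in $|S|$ variables) and a domain of characteristic zero, being a subring of $D$. Set $\ell:=\prod_{\ell_i\in L}\ell_i$; since $\Z[S]$ is a domain and each $\ell_i\neq 0$ we have $\ell\neq 0$. Let $B:=\Z[S][1/\ell]$, which is again a finitely generated $\Z$-algebra (namely $\Z[S][t]/(t\ell-1)$) and a domain of characteristic zero. Any ring homomorphism $\psi\colon B\to\Zmodp$ restricts to a ring homomorphism $\phi_p:=\psi|_{\Z[S]}$, and since $\ell$ is a unit in $B$, $\psi(\ell)$ is a unit of $\Zmodp$; as $\ell=\prod\ell_i$ and $\Zmodp$ is a domain, $\phi_p(\ell_i)\neq 0$ for all $i$, i.e.\ $0\notin\phi_p(L)$. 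So it suffices to produce a ring homomorphism $B\to\Zmodp$ for a positive-density set of primes $p$.

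Next I would reduce to the case of a number field. Because $B$ is a domain of characteristic zero, $\Z\setminus\{0\}$ consists of nonzerodivisors, so $B$ embeds in $B_\Q:=B\otimes_\Z\Q$, a nonzero finitely generated $\Q$-algebra. Pick a maximal ideal $\mathfrak m\subset B_\Q$; by Zariski's lemma (the weak Nullstellensatz) the residue field $F:=B_\Q/\mathfrak m$ is a number field. Let $B''\subseteq F$ be the image of $B\to B_\Q\to F$; it is a finitely generated $\Z$-algebra (a quotient of $B$) and a domain, and since the $\Q$-span of $B''$ contains the image of $B_\Q$, which is all of $F$, every element of $F$ is of the form $b/m$ with $b\in B''$, $m\in\Z_{>0}$, so $\operatorname{Frac}(B'')=F$. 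As any homomorphism $B''\to\Zmodp$ pulls back along $B\twoheadrightarrow B''$, we may now assume $B$ is a finitely generated $\Z$-subalgebra of a number field $F$ with $\operatorname{Frac}(B)=F$. Write $F=\Q(\alpha)$ with $\alpha\in B$ (a primitive element of $F=\operatorname{Frac}(B)$ may be taken in $B$, and multiplying by a suitable integer we may assume $\alpha$ is an algebraic integer); let $h(T)\in\Z[T]$ be its monic minimal polynomial, of degree $n=[F:\Q]$, so $\Z[T]/(h)=\Z[\alpha]\subseteq B$. Each of the finitely many $\Z$-algebra generators of $B$ lies in $F=\Q\text{-span}(1,\alpha,\dots,\alpha^{n-1})$, so there is $N\in\Z_{>0}$ with $B\subseteq\Z[1/N][\alpha]=\Z[1/N][T]/(h)$. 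For any prime $p\nmid N$ such that $h$ has a root $r$ modulo $p$, the assignment $T\mapsto r$ (with $N$ sent to its inverse mod $p$) defines a ring homomorphism $\Z[1/N][T]/(h)\to\Zmodp$, which restricts to a ring homomorphism $B\to\Zmodp$. Finally, letting $G$ be the Galois group of the splitting field of $h$ acting on its $n$ roots, the Chebotarev (Frobenius) density theorem shows the set of primes $p$ for which $h$ has a root mod $p$ has density equal to the proportion of $g\in G$ with a fixed point, which is at least $1/|G|>0$ since the identity fixes all $n$ roots; deleting the finitely many $p\mid N$ leaves a positive-density set of primes, completing the argument.

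The main obstacle is the middle step: a positive-dimensional variety over $\Zmodp$ need not have an $\Zmodp$-point, so one cannot simply reduce $\operatorname{Spec}B$ mod $p$ and count points; the key move is to first pass to a quotient $B''$ whose fraction field is a number field (a ``horizontal curve''), where Chebotarev gives exactly the primes with a residue field equal to $\Zmodp$. The other ingredients — that localizations and quotients of finitely generated $\Z$-algebras are finitely generated, that a characteristic-zero domain embeds in its rationalization, Zariski's lemma, and the Chebotarev density theorem — are standard.
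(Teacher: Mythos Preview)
Your argument is correct and follows essentially the same three-step skeleton as the paper: (i) use a form of the Nullstellensatz to pass from $\Z[S]$ to a subring of a number field, (ii) use the primitive element theorem to realize everything inside some $\Z[1/N][\alpha]\cong\Z[1/N][T]/(h)$, and (iii) invoke the Frobenius/Chebotarev density theorem to produce a positive-density set of primes $p$ for which $h$ has a root in $\Zmodp$.

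The packaging differs in two useful ways. First, you invert $\ell=\prod_{\ell_i\in L}\ell_i$ at the outset and work in $B=\Z[S][1/\ell]$, so the condition $0\notin\phi_p(L)$ becomes automatic and you only need \emph{one} splitting condition on the minimal polynomial $h$ (that it have a root mod $p$); the paper instead carries the product $\mathcal L$ of the elements of $L$ through the argument and at the end asks that $f_1\cdot L_1$ split completely into distinct linear factors mod $p$, which is a stronger requirement and yields (a priori) a smaller density. Second, you replace the paper's explicit Nullstellensatz contrapositive (``if $\mathcal L$ vanished on the zero set of $f_0$ then $f_0\mid\mathcal L$'') by the cleaner Zariski-lemma statement that a maximal ideal of the finitely generated $\Q$-algebra $B_\Q$ has residue field a number field; this also lets you get away with a single application of the primitive element theorem rather than two. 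The mathematical content is the same, but your version is somewhat more streamlined and slightly more scheme-theoretic in flavor.
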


By \emph{positive relative density}, we mean that the sequence has positive
density in the sequence of all primes. It is important to note that
Theorem~\ref{redu2} is not true for all primes. For example, if $S=\{i\}
\subset \bb C$ and $L$ is arbitrary, then the desired map does not exist for $p = -1 \pmod{4}$,
since the equation $x^2 =-1$ is not solvable in $\Zmodp$ for
these $p$.
Note that for the applications of Theorem~\ref{redu2} in this paper, we only
need that there exist infinitely many primes such that a map $\qmap$ exists,
which follows from those primes having positive relative density.

The role of $L$ in Theorem~\ref{redu2} is to guarantee that the homomorphism
$\quotientmap$ is injective on certain subsets of $\Z[S]$.  Such injectivity
is often necessary when applying Theorem~\ref{redu2}; for example, if one
were interested in the cardinality of $S$, one could guarantee that
$\quotientmap$ is injective on $S$ (and thus preserves the cardinality of
$S$) by setting $L:=\{s_1-s_2: s_1,s_2 \in S\}$.

Theorem~\ref{redu2} does not give upper bounds on the sizes of the smallest
primes $p$ in the infinite sequence it produces.  It would be an interesting
question to study whether a version of Theorem~\ref{redu2} can be proven that
includes, for example, an upper bound for at least one prime in the infinite
sequence, where the bound would depend on both $S$ and $L$ (see
Remark~\ref{RemEffective}).  Another interesting question is the following:
Given a set $A \subset \Zmodp$, are there conditions on $A$ and $p$ (say, that
$A$ is very small with respect to $p$) that allow one to construct a map that preserves algebraic incidences and that sends $A$ into some \czid\ (for example, $\bb Z$)? 

Readers interested in the methods of the current paper may also be interested
in the lecture by Serre \cite{Serre2009} (posted on the Math ArXiv) titled
``How to use finite fields for problems concerning infinite fields,'' which
focuses on problems in algebraic geometry.  An excellent discussion of Serre's
lecture from a general mathematical viewpoint may be found on Terence Tao's
blog \cite{TTaoBlog2009}, and Tao also mentions some relations between Serre's
lecture and the current paper.

This paper is organized as follows.  In the next few sections, we present a few
sample applications of Theorem~\ref{redu2}. Combining arguments from
\cite{BKT} with Theorem~\ref{redu2}, we prove 
a Szemer\' edi-Trotter-type result in Section~\ref{s:StTr}.
In Section~\ref{s:sum
prod}, we use Theorem~\ref{redu2} to demonstrate a sum-product estimate for
\czid s, based on well-known sum-product estimates in $\Zmodp$.
Section~\ref{s:Helf} is focused on combining a product result for $\SLtp$ from
\cite{Helf} with Theorem~\ref{redu2} to get an analogous product result for
$\SLt(D)$, where $D$ is a \czid.  In Section~\ref{s:mat}, we show that a
random matrix taking finitely many values in a \czid\ is singular with
exponentially small probability. This extends earlier results on integer
matrices to the complex setting. Finally, the proof of Theorem~\ref{redu2} is
given in Section~\ref{s:proofs}.

\section{A Szemer\' edi-Trotter-type result for \czid s}\label{s:StTr}

In this section, we apply Theorem~\ref{redu2} to the problem of bounding the
maximum number of incidences between a finite set of lines and a finite set of
points.  The well-known Szemer\' edi-Trotter Theorem \cite{SzTr} solves this
problem in the case of points and lines in $\bb R \cross \bb R$.
Recently, in \cite{BKT}, an analogous result was proven
for $\Zmodq\cross \Zmodq$ where $q$ is a prime.

\begin{theorem}[(Theorem~6.2 in {\cite{BKT}})]\label{t:bktSzTr}
Let $q$ be a prime, and let $\mc P$ and $\mc L$ be sets of points and lines,
respectively, in $\Zmodq \cross \Zmodq$ such that the cardinalities $\abs {\mc
P},\abs{\mc L}\le N \le q$.  Then there exist positive absolute constants $c$
and $\delta$ such that
\en{\label{bktSzTr}
\abs{\{(p,\ell)\in \mc P\cross \mc L: p \in \ell\}\rule{0pt}{12pt}} \le c N^{3/2
-\delta}.
}
\end{theorem}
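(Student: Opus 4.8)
Since the statement is precisely Theorem~6.2 of \cite{BKT}, the natural course is to cite that paper; for orientation we sketch the argument, whose engine is the finite-field sum--product estimate also proved in \cite{BKT}. The plan is in two stages: (1) establish the sum--product inequality in $\Zmodq$, and (2) convert it into the incidence bound by a pigeonholing and projective-normalization argument that turns an incidence-rich configuration into a set that is simultaneously additively and multiplicatively structured, contradicting sum--product.

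For stage (1), the input is: given $\delta_0>0$ there is $\varepsilon_0=\varepsilon_0(\delta_0)>0$ such that every $A\subseteq\Zmodq$ with $q^{\delta_0}\le\abs A\le q^{1-\delta_0}$ satisfies $\max\parens{\abs{A+A},\abs{A\cdot A}}\ge\abs A^{1+\varepsilon_0}$; this is proved by an iteration showing that a set which is close to being closed under both operations would have to exhaust $\Zmodq$, using that $\Zmodq$ has no nontrivial subfields. Via the Pl\"unnecke--Ruzsa inequalities one repackages this into the working form: if a set $A$ admits many dilates $m\cdot A$ each having small sumset with a fixed set, then $\abs A$ must fall outside the window $[q^{\delta_0},q^{1-\delta_0}]$.

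For stage (2), suppose for contradiction that $\abs{\{(p,\ell)\in\mc P\cross\mc L:p\in\ell\}}>cN^{3/2-\delta}$. Dyadic pigeonholing yields refined $\mc P'\subseteq\mc P$, $\mc L'\subseteq\mc L$ with $\abs{\mc P'}\sim\abs{\mc L'}\sim M$ and a degree parameter $r$ such that every line of $\mc L'$ meets $\mc P'$ in $\sim r$ points, every point of $\mc P'$ lies on $\sim r$ lines of $\mc L'$, and $Mr\gtrsim N^{3/2-\delta}/\log^{O(1)}N$; since two points lie on only one line one has $r\lesssim M^{1/2}$, so $M\gtrsim N^{1-O(\delta)}$ and $r\gtrsim N^{1/2-O(\delta)}$. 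Choose two high-degree points $x_0,x_1\in\mc P'$ on a common line of $\mc L'$ and apply a projective transformation carrying $x_0,x_1$ to the two points at infinity; incidences are unchanged, and since a point is recovered from the pair of pencil-lines through $x_0$ and through $x_1$ on which it lies, $\mc P'$ now sits inside a Cartesian product $A\cross B$, while the lines of $\mc L'$ become affine lines $y=mx+c$, still meeting $A\cross B$ in $\sim r$ points each. Such a line records a subset $A_0\subseteq A$ with $\abs{A_0}\sim r$ and $mA_0+c\subseteq B$; feeding the $\sim M$ lines, with their $\sim M$ slopes, into the sum--product consequence above forces either $M\gtrsim q$ (incompatible with the assumed incidence count for small $\delta$) or $\abs A^{1+\varepsilon_0}\lesssim\abs A\cdot\log^{O(1)}N$, which contradicts $r\gtrsim N^{1/2-O(\delta)}$ once $\delta$ is chosen small enough in terms of $\varepsilon_0$ and $c$ large enough.

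The main obstacle is stage (1): the iteration proving that near-closure under both addition and multiplication forces the full field is the technical heart and has no shortcut. Within stage (2) the delicate steps are the projective normalization --- checking that the two pencils genuinely serve as coordinate axes and that the resulting sets $A,B$ have sizes inside $[q^{\delta_0},q^{1-\delta_0}]$ where the sum--product gain is available, with the endpoint ranges of $N$ (very small $N$, absorbed into the constant $c$, and $N$ close to $q$, handled by an elementary Cauchy--Schwarz bound) treated separately --- and the bookkeeping of the polylogarithmic pigeonholing losses so that the final exponent saving $\delta$ is a genuine positive absolute constant.
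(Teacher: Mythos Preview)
The paper does not prove this statement itself; it is quoted as Theorem~6.2 of \cite{BKT}, and the paper's only contribution is Remark~\ref{SzTrRem}, which explains that the original \cite{BKT} argument carried the extra hypothesis $N=q^{\alpha}$ and that this can be dropped by substituting the later sum--product estimates of \cite{Konyagin,Garaev,KS} (valid for all $|A|<q^{1/2}$) for the one in \cite{BKT}. Your sketch is a faithful outline of the \cite{BKT} proof and matches what the paper cites.

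The one place your sketch does not quite close is precisely the point the paper's remark isolates. In stage~(1) you invoke the original \cite{BKT} sum--product bound, which needs $q^{\delta_0}\le |A|\le q^{1-\delta_0}$ with $\varepsilon_0$ depending on $\delta_0$. The set $A$ produced in stage~(2) has size $\sim N^{1/2-O(\delta)}$, so whether it lands in that window depends on how $N$ compares to $q$, not on the absolute size of $N$. Your proposed fix, ``very small $N$ absorbed into the constant $c$,'' handles only $N$ bounded by an absolute constant; it does not cover, say, $N=q^{1/1000}$, where $N$ is unbounded yet $|A|\sim q^{1/2000}$ may sit below $q^{\delta_0}$. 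This is exactly why the \cite{BKT} version needed $N=q^{\alpha}$. The correct repair is the one the paper states: replace the sum--product input by one of the unconditional estimates in \cite{Konyagin,Garaev,KS}; with that substitution your stage~(2) argument goes through and yields the theorem for all $N\le q$.
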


\begin{remark}\label{SzTrRem} The original version of Theorem~\ref{t:bktSzTr}
proven in \cite{BKT} relied on the best known sum-product result at
the time (also found in \cite{BKT}), which worked only for subsets
of $\Zmodq$ with cardinality between $q^\alpha$ and $q^{1-\alpha}$ for
a constant $\alpha$.  In particular, the proof in \cite{BKT} assumed
that Inequality~\eqref{bktSzTr} was false and used this assumption
to construct a subset $A$ of $\Zmodq$ with cardinality
$N^{1/2-C\delta}$, for some constant $C$, such that
$\max\{\abs{A+A},\abs{AA}\}$ was small, a contradiction of the
sum-product estimate proven in \cite{BKT}.  Thus, the version of
Theorem~\ref{t:bktSzTr} in \cite{BKT} required the additional
assumption that $N = q^\alpha$ for a constant $\alpha$.

To prove  Theorem~\ref{t:bktSzTr}  as stated above,  one can simply
replace the sum-product results in \cite{BKT} by  more recent
estimates that apply for all subsets of $\Zmodq$ (for example,
\cite{Konyagin,Garaev,KS}).
\end{remark}

In a general ring $R$, we define a line to be the set of solutions $(x,y)$ in
$R\cross R$ to an equation $y = mx+b$, where $m$ and $b$ are fixed elements of
$R$.
Using Theorem~\ref{redu2}, we prove  that the same bound
as in Theorem~\ref{t:bktSzTr} holds for an arbitrary \czid:

\begin{theorem}\label{ourSzTr}
Let $D$ be a \czid, and let $\mc P$ and $\mc L$ be sets of points and lines
(respectively) in $D \cross D$ with cardinalities $\abs{\mc P},\abs{\mc L}\le
N$.  Then there exist positive absolute constants $c$ and $\delta$ such that
$$\abs{\{(p,\ell)\in \mc P\cross \mc L: p \in \ell\}\rule{0pt}{12pt} }\le c N^{3/2
-\delta}.$$
\end{theorem}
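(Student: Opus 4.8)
The plan is to reduce the statement to Theorem~\ref{t:bktSzTr} over $\Zmodq$ via the mapping provided by Theorem~\ref{redu2}. First I would record the data. A point $p \in D \cross D$ is a pair $(a,b)$ with $a,b \in D$, and a line $\ell \in \mc L$ is determined by a pair $(m,c)$ with the incidence relation $p = (a,b) \in \ell$ meaning $b = ma + c$, i.e. $b - ma - c = 0$. Let $S \subset D$ be the finite set consisting of all coordinates $a,b$ of points in $\mc P$ together with all slopes $m$ and intercepts $c$ of lines in $\mc L$; then $S$ is finite with $|S| \le 4N$, and every relevant quantity $b - ma - c$ lives in the subring $\Z[S]$. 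The key observation is that an incidence $p \in \ell$ is equivalent to the vanishing of the element $\epsilon_{p,\ell} := b - ma - c \in \Z[S]$, while a non-incidence is equivalent to $\epsilon_{p,\ell} \ne 0$.

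Next I would apply Theorem~\ref{redu2} with this $S$ and with $L := \{\epsilon_{p,\ell} : (p,\ell) \in \mc P \cross \mc L,\ p \notin \ell\}$, the (finite) set of all \emph{non-incidence defects}; this is a finite set of non-zero elements of $\Z[S]$. The theorem yields infinitely many primes $q$ and, for each, a ring homomorphism $\quotientmap : \Z[S] \to \Zmodq$ with $0 \notin \quotientmap(L)$. Since $\quotientmap$ is a ring homomorphism, $\quotientmap(\epsilon_{p,\ell}) = \quotientmap(b) - \quotientmap(m)\quotientmap(a) - \quotientmap(c)$, so $\quotientmap$ sends the point $(a,b)$ to the point $(\quotientmap(a),\quotientmap(b))$ and the line $(m,c)$ to the line $(\quotientmap(m),\quotientmap(c))$ in $\Zmodq \cross \Zmodq$, and it preserves incidences: if $p \in \ell$ then $\epsilon_{p,\ell} = 0$ so its image is $0$, hence $\quotientmap(p) \in \quotientmap(\ell)$; if $p \notin \ell$ then $\epsilon_{p,\ell} \in L$, so $\quotientmap(\epsilon_{p,\ell}) \ne 0$ and $\quotientmap(p) \notin \quotientmap(\ell)$. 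Thus the incidence count over $D$ equals the incidence count between the image point set $\mc P' := \quotientmap(\mc P)$ and image line set $\mc L' := \quotientmap(\mc L)$ over $\Zmodq$ — \emph{provided} we handle multiplicities correctly, which is the one subtlety below.

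The final step is bookkeeping with multiplicities. The map $\quotientmap$ need not be injective on $\mc P$ or on $\mc L$, so $|\mc P'|$ and $|\mc L'|$ could be smaller than $N$; that only helps, since $|\mc P'|, |\mc L'| \le N$ still holds, and I should choose $q$ large enough (from the infinite sequence, discarding finitely many primes) that $N \le q$, so Theorem~\ref{t:bktSzTr} applies to $\mc P', \mc L'$. But I must check the incidence count is genuinely preserved and not deflated: two distinct points of $\mc P$ collapsing to the same image point, each incident to a given line, would be undercounted over $\Zmodq$. To rule this out, I enlarge $L$ in the application of Theorem~\ref{redu2} to also include all differences $a_1 - a_2$ and $b_1 - b_2$ of coordinates of distinct points of $\mc P$, and all differences $m_1 - m_2$, $c_1 - c_2$ of parameters of distinct lines of $\mc L$ — i.e. arrange that $\quotientmap$ is injective on $\mc P$ and on $\mc L$. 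Then $|\mc P'| = |\mc L'| = N$ (well, $\le N$), the incidence-counting bijection $(p,\ell) \mapsto (\quotientmap(p), \quotientmap(\ell))$ is a bijection from incident pairs over $D$ to incident pairs over $\Zmodq$, and so
$$\abs{\{(p,\ell)\in \mc P\cross \mc L: p \in \ell\}} = \abs{\{(p',\ell')\in \mc P'\cross \mc L': p' \in \ell'\}} \le c N^{3/2 - \delta}$$
by Theorem~\ref{t:bktSzTr}, with the \emph{same} absolute constants $c, \delta$. This completes the proof. The main obstacle is not any deep argument but precisely this multiplicity bookkeeping: one has to be careful that collapsing under $\quotientmap$ cannot merge distinct incidences, and the clean fix is to throw the relevant coordinate-difference elements into $L$ so that $\quotientmap$ is injective where it needs to be; everything else is a direct translation through the ring homomorphism.
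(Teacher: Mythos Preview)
Your proof is correct and follows essentially the same route as the paper: collect all coordinates into $S$, put coordinate differences into $L$ to force injectivity of $\quotientmap$ on $\mc P$ and on $\mc L$, pick $q>N$ from the infinite sequence, and apply Theorem~\ref{t:bktSzTr}. The one difference is that you also put the non-incidence defects $\epsilon_{p,\ell}$ into $L$ to get an \emph{equality} of incidence counts; the paper skips this, since once $\quotientmap$ is injective on $\mc P$ and $\mc L$ the ring-homomorphism property alone gives $\abs{\{(p,\ell):p\in\ell\}}\le\abs{\{(p',\ell'):p'\in\ell'\}}$, and an inequality is all that is needed.
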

\noindent The constants $c$ and $\delta$ are the same as those in
Theorem~\ref{t:bktSzTr}.  Any improvement to Theorem~\ref{t:bktSzTr}, for
example, better constants or giving a good bound when $\mc P$ and $\mc L$ have
very different cardinalities, would also immediately translate to
Theorem~\ref{ourSzTr} above.  In the case of $\bb R\cross \bb R$, this theorem
is true with $\delta$ being replaced with the optimal constant $1/6$ (by the
Szemer\' edi-Trotter Theorem \cite{SzTr}).

Restricting to the case of complex numbers, Solymosi \cite[Lemma~1]{soly}
has proven a Szemer\' edi-Trotter-type result over $\C$ with $\delta=1/6$,
under the additional assumption that the set of points form a Cartesian
product in $\bb C^2$. Our result has a small $\delta$ but does not require
this additional assumption.  One would expect that $\delta=1/6$ holds
without any additional assumptions, and indeed, a tight result appears in
a paper on the Math ArXiv by Csaba D. T\'oth \cite{Toth2003}.

We conjecture that one can set $\delta=1/6$ in $\Zmodp$ given that $N$ is
sufficiently small compared to $p$. (This implies $\delta =1/6$ for the complex
case.)

\begin{proof}[Proof of Theorem~\ref{ourSzTr}]
Without loss of generality, assume that $\abs{\mc P} = \abs{\mc L} = N$,
adding ``dummy'' points and lines if necessary.  Say that $\mc P =
\{(x_i,y_i): i= 1,\ldots, N\}$, and, uniquely parameterizing a line $y = mx+b$
by the ordered pair $(m,b)$, say that $\mc L=\{ (m_i,b_i) : i=1,\ldots,N\}$.
Let $S:= \bigcup_{i=1}^N\{ x_i,y_i,m_i,b_i\}$,  set
\e{L_0&:=
\{x_i-x_j: 1\le i< j \le N\} \cup
\{y_i-y_j: 1\le i< j \le N\} \cup\\
&\qquad\cup\{m_i-m_j: 1\le i< j \le N\} \cup \{b_i-b_j: 1\le i< j
\le N\}, } and
let $L:=L_0\setminus \{0\}$.
By Theorem~\ref{redu2}, there exists a prime $q > N$ and a ring
homomorphism $\mapq: \Z[S] \to \Zmodq$ such that $0\notin\mapq(L)$.
Define a map $\Mapq: \Z[S]\cross \Z[S] \to \Zmodq \cross \Zmodq$ by
$\Mapq(a,b) = (\mapq(a), \mapq(b))$.  Because $0\notin\mapq(L)$, we
know that $\abs{\Mapq(\mc P)} = \abs{\Mapq(\mc L)} = N$. Thus, by
Theorem~\ref{t:bktSzTr}, there exist absolute constants $c$ and
$\delta$ such that \e{ \abs{\{(p',\ell')\in \Mapq(\mc P)\cross
\Mapq(\mc L): p'\in \ell'\} \rule{0pt}{12pt} } \le c N^{3/2 -\delta}. }
Since $\mapq$ is a ring homomorphism, the equation $y =mx+b$ implies that
$\mapq(y) = \mapq(mx+b) = \mapq(m)\mapq(x) + \mapq(b)$; and thus,
$$\abs{\{(p,\ell)\in \mc P\cross \mc L: p \in \ell\}} \le
\abs{\{(p',\ell')\in \Mapq(\mc P)\cross \Mapq(\mc L): p'\in \ell'\}} \le c
N^{3/2 -\delta},
$$
completing the proof.
\end{proof}

\section{A sum-product result for \czid s }\label{s:sum prod}

Given a subset $A$ of a ring, we define $A+A := \{a_1 + a_2: a_1,a_2 \in A\}$
and  $A A := \{a_1 a_2: a_1,a_2 \in A\}$.  Heuristically, sum-product
estimates state that one cannot find a subset $A$ such that both $A+A$ and $A
A$ have small cardinality, unless $A$ is close to a subring.  The first
sum-product result was proven in 1983 by Erd\H os and Szemer\' edi
\cite{ErSz} for the integers, and there have been numerous improvements and
generalizations, see for example \cite{Nat}, \cite{Ford}, \cite{Elekes}, and
\cite{Chang}.  Proving sum-product estimates in $\Zmodp$, where $p$ is a
prime, has been the focus of some recent work (see, for example, \cite{BKT},
\cite{B}, and \cite{Konyagin}), with the best known bound due to Katz and
Shen \cite{KS}, slightly improving a result of Garaev \cite{Garaev}:

\begin{theorem}[(\cite{KS})]
\label{garaevsumprod} Let $p$ be a prime and let $A$ be a subset of $\Zmodp$
such that $\abs{A} < p^{1/2}$. Then, there exist absolute constants $C$ and
$\alpha$ such that 
$$ C\abs A ^{14/13} (\log \abs A)^\alpha \le
\max\{\abs{A+A}, \abs{ AA}\}.$$
\end{theorem}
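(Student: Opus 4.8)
The plan is to follow the exponential-sum method of Garaev together with the combinatorial refinement of Katz and Shen; in particular one should \emph{not} route through the incidence bound of Theorem~\ref{t:bktSzTr}, whose proof already passes through sum--product estimates and whose $\delta$ is in any case far too small to produce the exponent $14/13$. Normalize first: we may assume $N:=\abs A$ exceeds an absolute constant and that $0\notin A$ (deleting $0$ changes $\abs A$ by at most $1$ and plays no role in the multiplicative structure). Put $M:=\max\{\abs{A+A},\abs{AA}\}$ and $K:=M/N$, so that $\abs{A+A}\le KN$ and $\abs{AA}\le KN$; it then suffices to prove $K\gg N^{1/13}(\log N)^{\alpha}$.

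The analytic input is a bilinear exponential-sum count: for any $B_1,B_2,C_1,C_2\subseteq\Zmodp$, the number of solutions of $b_1+b_2=c_1c_2$ with $b_i\in B_i$, $c_i\in C_i$ equals $\tfrac{\abs{B_1}\abs{B_2}\abs{C_1}\abs{C_2}}{p}+O\!\parens{p^{1/2}\parens{\abs{B_1}\abs{B_2}\abs{C_1}\abs{C_2}}^{1/2}}$. I would prove this by writing the indicator of $b_1+b_2-c_1c_2=0$ as $\tfrac1p\sum_{t\in\Zmodp}e_p(t(b_1+b_2-c_1c_2))$: the frequency $t=0$ gives the main term; for $t\neq 0$ one factors the sum, bounds the bilinear piece $\abs{\sum_{c_1\in C_1,c_2\in C_2}e_p(-tc_1c_2)}\le\parens{p\abs{C_1}\abs{C_2}}^{1/2}$ by Cauchy--Schwarz in $c_1$, and then sums over $t\neq 0$ using Parseval for $B_1$ and $B_2$. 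An entirely analogous estimate holds with $c_1c_2$ replaced by $c_1(c_2-c_3)$ or by $c_1/c_2$. The point is that as soon as $N<p^{1/2}$ the error term dominates the main term for the choices of $B_i,C_i$ built from $A$, which forces the relevant representation function to be simultaneously bounded and supported on a large subset of $\Zmodp$; this is the leverage that turns smallness of $A+A$ and $AA$ into a lower bound on $K$.

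The combinatorial half is the bookkeeping. Under $\abs{A+A},\abs{AA}\le KN$, the Pl\"unnecke--Ruzsa inequalities control all iterated sumsets, Ruzsa's triangle inequality and covering arguments control the iterated product sets, and the Balog--Szemer\'edi--Gowers theorem upgrades an energy bound to a doubling bound on a large subset where needed; together these bound every mixed set one forms from $A$ --- for example $A(A-A)$, $(A+A)(A+A)$, or the ratio set $\{\tfrac{a_1-a_2}{a_3-a_4}\}$ --- by $K^{O(1)}N$ with explicit exponents. Chaining such an upper bound against the lower bound coming from the exponential-sum step, and optimizing which mixed configuration to use so as to maximize the power of $N$, yields an inequality of the shape $K^{O(1)}N\gg N^{c}(\log N)^{c'}$, hence $K\gg N^{c''}$ up to logarithms; calibrating the configuration exactly as Katz and Shen do produces $c''=1/13$ and the factor $(\log N)^{\alpha}$, the latter emerging from a dyadic decomposition of a representation function together with the convexity (harmonic-mean) inequality that governs how additive and multiplicative densities trade off.

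The main obstacle I anticipate is precisely this last optimization. The raw Garaev-type count fed naively into Pl\"unnecke--Ruzsa gives an exponent strictly below $1/13$ (Garaev's own argument reaches only $1/14$), so extracting the sharp value requires selecting the right mixed set, tracking the Ruzsa exponents with no slack, and --- delicately --- verifying that the error term in the exponential-sum estimate dominates all the way up to the threshold $\abs A<p^{1/2}$ rather than only on a smaller range. A secondary difficulty is that there is no Pl\"unnecke-type inequality mixing addition and multiplication directly, so the passage between the two structures must go through Balog--Szemer\'edi--Gowers, and the logarithmic losses incurred there have to be balanced against the logarithmic gain asserted in the statement.
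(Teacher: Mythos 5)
This theorem is not proved in the paper. It is imported as a black box, attributed to Katz and Shen \cite{KS} (building on Garaev \cite{Garaev}), and the paper's own contribution in this section is Theorem~\ref{oursumprod} together with its proof, which transfers the cited $\Zmodp$ bound to an arbitrary characteristic-zero integral domain via the mapping Theorem~\ref{redu2}. There is therefore no in-paper argument for your sketch to be compared against; reproving \cite{KS} from scratch is not what the section is doing.

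Taken on its own terms, your outline gathers the right raw materials: the bilinear Fourier count for solutions of $b_1+b_2=c_1c_2$ with error $O\bigl(\sqrt{p\abs{B_1}\abs{B_2}\abs{C_1}\abs{C_2}}\bigr)$ is correct and is indeed one of Garaev's lemmas, and you are right that one must not route through the Szemer\'edi--Trotter-type bound of Theorem~\ref{t:bktSzTr}, which would be both circular and far too weak for the exponent $14/13$. But the plan never names the specific mixed configuration that lifts the exponent from Garaev's $15/14$ to Katz--Shen's $14/13$; you flag that choice yourself as the ``main obstacle'' and leave it unresolved, and that choice \emph{is} the theorem. Everything preceding it is a correct summary of the Garaev template, not a derivation of the stated bound. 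I would also hesitate before inserting Balog--Szemer\'edi--Gowers as the bridge between additive and multiplicative structure here: it is not clearly part of the Katz--Shen argument, and a careless invocation of it costs polylogarithmic factors that you would then have to win back in order to land on $(\log\abs{A})^{\alpha}$.
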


Theorem~\ref{oursumprod} demonstrates the same lower bound on $\max\{
\abs{A+A}, \abs{A A}\}$ for any finite subset $A$ of a \czid.

\begin{theorem} \label{oursumprod}
There are positive absolute constants $C$ and $\alpha$ such that, for every finite subset
$A$ of a \czid, $$C\abs{A}^{14/13} (\log \abs A)^\alpha \le \max\{ \abs{A+A},
\abs{AA}\}.$$
\end{theorem}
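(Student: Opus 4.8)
The plan is to mimic the proof of Theorem~\ref{ourSzTr}: transfer the problem to $\Zmodp$ via Theorem~\ref{redu2}, apply Theorem~\ref{garaevsumprod} there, and pull the bound back. So let $A = \{a_1,\ldots,a_n\}$ be a finite subset of a \czid\ $D$, with $n = \abs A$; I may assume $n$ is large, since for bounded $n$ the claimed inequality holds trivially by choosing $C$ small. Set $S := A$ and take $\Z[S] = \Z[A]$, the subring of $D$ generated by $A$.

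The key point is to choose the set $L$ so that the homomorphism $\qmap : \Z[A] \to \Zmodp$ produced by Theorem~\ref{redu2} is injective not just on $A$ but on $A+A$ and on $AA$, so that $\abs{\qmap(A)} = n$, $\abs{\qmap(A)+\qmap(A)} = \abs{A+A}$, and $\abs{\qmap(A)\qmap(A)} = \abs{AA}$. Concretely I would take
\e{
L_0 &:= \{a_i + a_j - a_k - a_\ell : 1\le i,j,k,\ell \le n\} \\
&\qquad \cup \{a_i a_j - a_k a_\ell : 1\le i,j,k,\ell \le n\},
}
and let $L := L_0 \setminus \{0\}$, a finite subset of non-zero elements of $\Z[A]$. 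Then $0 \notin \qmap(L)$ forces: whenever $a_i + a_j \ne a_k + a_\ell$ in $D$ we also have $\qmap(a_i)+\qmap(a_j) \ne \qmap(a_k)+\qmap(a_\ell)$ in $\Zmodp$ (and symmetrically for products), since $\qmap$ is a ring homomorphism. Hence the natural surjections $A+A \to \qmap(A)+\qmap(A)$ and $AA \to \qmap(A)\qmap(A)$ are also injective, and likewise $A \to \qmap(A)$ (taking $j=\ell$ fixed, or noting $a_i - a_k \in L_0$ already), giving the three cardinality equalities above.

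By Theorem~\ref{redu2} there are infinitely many primes $p$ — indeed a positive-density set — for which such a $\qmap$ exists; among these I pick one with $p > n^2$, which is possible since only finitely many primes are excluded by that inequality. Then $\abs{\qmap(A)} = n < p^{1/2}$, so Theorem~\ref{garaevsumprod} applies to the set $\qmap(A) \subseteq \Zmodp$, yielding absolute constants $C,\alpha$ with
\[
C\, \abs{\qmap(A)}^{14/13} (\log \abs{\qmap(A)})^\alpha \le \max\{\abs{\qmap(A)+\qmap(A)},\, \abs{\qmap(A)\qmap(A)}\}.
\]
Substituting the cardinality equalities turns the left side into $C\, n^{14/13}(\log n)^\alpha$ and the right side into $\max\{\abs{A+A},\abs{AA}\}$, which is exactly the claimed bound. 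The same constants $C,\alpha$ as in Theorem~\ref{garaevsumprod} work here.

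The only mild subtlety — and the one step worth stating carefully rather than the "main obstacle," since there isn't a hard one — is the bookkeeping that a ring homomorphism respecting $L$ really does give \emph{equalities} (not just inequalities) of the relevant cardinalities: surjectivity of $A+A \twoheadrightarrow \qmap(A)+\qmap(A)$ is automatic from $\qmap$ being a homomorphism, and injectivity is precisely what the sum-difference elements of $L$ encode. One should also note that Theorem~\ref{garaevsumprod} requires $\abs{\qmap(A)} < p^{1/2}$ strictly, which is why $p > n^2$ (rather than $p > n$) is imposed; since Theorem~\ref{redu2} guarantees a positive-density set of good primes, discarding those below $n^2$ leaves infinitely many, and any one of them finishes the argument.
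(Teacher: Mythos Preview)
Your proof is correct and follows essentially the same route as the paper's own argument: define $L$ from pairwise sum-differences and product-differences, invoke Theorem~\ref{redu2} to get a homomorphism into $\Zmodp$ with $p>\abs{A}^2$ that preserves $\abs A$, $\abs{A+A}$, and $\abs{AA}$, and then apply Theorem~\ref{garaevsumprod}. The only cosmetic differences are that the paper also includes the set $\{a_1-a_2:a_1,a_2\in A\}$ explicitly in $L_0$ (which, as you note, is redundant), and that you are slightly more careful in spelling out why $p>n^2$ is needed for the hypothesis $\abs{\qmap(A)}<p^{1/2}$.
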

\noindent
The constants $C$ and $\alpha$ in this result is the same as those in
Theorem~\ref{garaevsumprod}.

Theorem~\ref{oursumprod} applies to a very general class of rings; however,
our mapping approach requires that the rings be commutative and have
characteristic zero.  For some results in the non-commutative case, see
\cite{Chang}; and for some results in $\Z/m\Z$ where $m$ is a composite, see
\cite{Chang2}.

\begin{proof}[Proof of Theorem~\ref{oursumprod}]  Because we are interested
in a lower bound on $\abs{A+A}$ and $\abs{AA}$, all we need in order to
apply Theorem~\ref{garaevsumprod} is a ring homomorphism $\phi$ from the
given \czid\ to $\Zmodp$ satisfying $\abs{\phi(A)} = \abs A$ (since any ring
homomorphism automatically satisfies $\abs{\phi(A) + \phi(A)} =
\abs{\phi(A+A)} \le \abs{A+A}$ and $\abs{\phi(A)\phi(A)} \le \abs{AA}$).
However, Theorem~\ref{redu2} also makes it easy to find a ring homomorphism
that preserves the cardinalities of $A+A$ and $AA$, as we will show below
(such a map would be useful for proving \emph{upper} bounds on $\abs{A+A}$
and $\abs{AA}$).  

Let
$$L_0 := \{a_1 - a_2: a_1,a_2 \in A\} \cup
\{a_1 + a_2 - (a_3 + a_4) : a_i \in A\} \cup
\{a_1 a_2 - a_3 a_4 : a_i \in A\}$$
and let $L:= L_0 \setminus \{0\}$.

By Theorem~\ref{redu2}, there exists a prime $p > \abs {A}^2$ and  a
ring homomorphism $\qmap: \Z[A]\to \Zmodp$ such that
\begin{enumerate}
\item[(i)] \qquad $\abs {\qmap (A)}= \abs { A} $, 
%which follows since $ a_1 - a_2 \in L $ for every $a_1, a_2 \in A$ where
%$a_1 \ne a_2$,
%
\item[(ii)] \qquad $ \abs {\qmap(A) + \qmap(A)} = \abs { A + A}$, and
%which follows by definition of $L$ (if $a_1+ a_2 - (a_3 +a_4) \ne 0$, then
%$\qmap(a_1) + \qmap(a_2) = \qmap(a_1 + a_2) \ne \qmap(a_3+a_4) = \qmap(a_3)
%+ \qmap(a_4)$) and because $\qmap$ is a ring homomorphism (if $a_1+ a_2 -
%(a_3 +a_4) = 0$, then $\qmap(a_1) + \qmap(a_2) = \qmap(a_1 + a_2) =
%\qmap(a_3+a_4) = \qmap(a_3) + \qmap(a_4)$),
%
\item[(iii)] \qquad $ \abs {\qmap(A)  \qmap(A)} = \abs { A  A}$.
%, which follows by the same reasoning as (ii).
\end{enumerate}
All three facts above follow from the definition of a ring homomorphism,
along with the definition of $L$ and the fact that $0 \notin \qmap(L)$.
We can now apply Theorem~\ref{garaevsumprod} to get that there exist
positive constants $C$ and $\alpha$ such that
\begin{equation*}% \label{almost there}
C\abs { \qmap(A)}^{14/13}(\log \abs A)^\alpha 
\le \max\{\abs { \qmap(A) + \qmap(A)} ,
\abs{ \qmap(A)  \qmap(A)}\}.
\end{equation*}
Finally, substituting (i),  (ii), and (iii) into this inequality
%Equation~\eqref{almost there}
gives the desired result.
\end{proof}

\section{A matrix product result for $SL_2(D)$}\label{s:Helf}

In this section, we will consider finite subsets of the special linear group
$\SL_2(D)$ of $2$ by $2$ matrices with determinant 1 and entries in a \czid\
$D$.   For $A$ a finite subset of $\SL_2(D)$, let $\angles A$ denote the
smallest subgroup of $\SL_2(D)$ (under inclusion) that contains $A$.  We will
refer to $\angles A$ as the \emph{group generated by $A$}.
In general, the goal of this section will be to give conditions on $\angles A$
so that cardinality of the triple product $AAA:= \{a_1a_2a_3 : a_i \in A\}$ is
large.

Helfgott proved the following theorem in \cite{Helf}:
\begin{theorem}[(\cite{Helf})]\label{Helfthm}
Let $p$ be a prime. Let $A$ be a subset of $\SL_2(\Zmodp)$ not contained in
any proper subgroup, and assume that $\abs A < p^{3-\epsilon}$ for some
fixed $\epsilon>0$.  Then
$$\abs{AAA} > c \abs{A}^{1+\delta},$$
where $c >0$ and $\delta>0$ depend only on $\epsilon$.
\end{theorem}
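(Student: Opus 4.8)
The final statement in the excerpt is Helfgott's theorem for $\SL_2(\Zmodp)$ itself, quoted from \cite{Helf}. Since this is a cited external result rather than an original theorem of the present paper, the ``proof'' to be sketched is really a plan for how one would reconstruct Helfgott's argument. I therefore outline the strategy of \cite{Helf}.

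\medskip

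\noindent\emph{The plan is to} run a growth/non-growth dichotomy driven by sum-product phenomena inside $\Zmodp$. First I would reduce to understanding how $A$ interacts with the standard Borel, diagonal, and unipotent subgroups. The key structural input is that if $A$ generates all of $\SL_2(\Zmodp)$ (equivalently, is not contained in a proper subgroup, for $p$ large), then $A$ cannot be too tightly correlated with a single torus or unipotent subgroup: by an ``escape from subvarieties'' argument, a bounded-length product of elements of $A$ contains a regular semisimple element $g$, and then a further bounded product contains elements not normalizing the maximal torus $T_g$ fixed by $g$. Replacing $A$ by $A^k$ for a suitable absolute constant $k$ (which only weakens the exponent $\delta$ by a constant factor and is harmless since we allow $\abs{AAA}$ rather than insisting on $AA$), I may assume $A$ contains such configurations.

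\medskip

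\noindent Next comes the heart of the matter: a \emph{trace-map} sum-product estimate. Conjugating so that $T$ is the diagonal torus, I would look at the set $A \cap T$ — or rather at the set of diagonal entries of elements obtained by intersecting products of $A$ with $T$ — and study how the trace function behaves under the group operations. Multiplying two diagonal matrices adds nothing new multiplicatively in an obvious way, but conjugating a diagonal matrix by a generic element of $A$ and multiplying produces traces that are \emph{rational (in fact polynomial) combinations} of the original diagonal entries; the upshot is that a slow-growing $A$ forces a subset $X \subseteq \Zmodp$ with $\abs X$ polynomially related to $\abs A$ for which both $\abs{X+X}$ and $\abs{XX}$ are close to $\abs X$. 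That contradicts the Bourgain--Katz--Tao / Konyagin sum-product theorem in $\Zmodp$ (the same circle of results quoted later in the excerpt as Theorem \ref{garaevsumprod}), provided $X$ is neither too small nor too large — and the hypothesis $\abs A < p^{3-\epsilon}$ is exactly what guarantees, after the polynomial bookkeeping, that $X$ stays in the admissible range $p^{\alpha} < \abs X < p^{1-\alpha}$. One then feeds this back: the sum-product contradiction means the assumed bound $\abs{AAA} \le c\abs A^{1+\delta}$ cannot hold, with $\delta$ and $c$ extracted from the sum-product exponent and the (absolute, $\epsilon$-controlled) degrees of the polynomial maps used along the way.

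\medskip

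\noindent The steps, in order: (1) fix the Borel/torus/unipotent coordinates and record the basic commutation identities; (2) escape-from-subvarieties to place regular semisimple elements and torus-non-normalizing elements in $A^{O(1)}$; (3) show that if $\abs{AAA}$ is small then $\abs{A \cdot (A\cap gTg^{-1}) \cdot A}$ is small for the relevant tori, and convert this into statements about the diagonal entries; (4) assemble from these a single set $X \subseteq \Zmodp^{\times}$ with $\max\{\abs{X+X},\abs{XX}\} \le \abs X^{1+o(1)}$ and $\abs X \ge \abs A^{c_1}$; (5) check $\abs X$ lies in the range where the sum-product theorem applies, using $\abs A < p^{3-\epsilon}$ and $\abs A > p^{c_2}$ for the lower end (if $\abs A$ is smaller than any fixed power of $p$, one instead needs a separate, more elementary argument, or uses the version of sum-product valid for all small sets); (6) derive the contradiction and read off $\delta(\epsilon)$, $c(\epsilon)$. \textbf{The main obstacle} is step (3)–(4): controlling the interaction of the multiplicative structure of the tori with the additive structure coming from traces, so that slow growth of $AAA$ genuinely yields a single set that is simultaneously sum-small and product-small — this requires the escape arguments of step (2) to produce ``enough independent'' tori and unipotents, and it is where the characteristic-$p$ field structure (not just the group structure) is essential, which is precisely why this $\Zmodp$ result is used as a black box in the present paper rather than reproved by the mapping method.
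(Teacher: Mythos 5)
You correctly identify that Theorem~\ref{Helfthm} is a cited result from \cite{Helf} used as a black box; the present paper contains no proof of it, so there is nothing internal to compare against. Your high-level reconstruction of Helfgott's original argument is a fair one: escape from subvarieties to collect regular semisimple and torus-non-normalizing elements in $A^{O(1)}$, a growth dichotomy relating slow growth of $A$ to concentration on maximal tori, passage via the trace map to a set $X\subset\Zmodp$ with both $\abs{X+X}$ and $\abs{XX}$ small, and a contradiction with the finite-field sum-product theorem. One small calibration: the hypothesis $\abs A < p^{3-\epsilon}$ is there primarily because $\abs{\SL_2(\Zmodp)}\asymp p^3$, so sets of size at least $p^{3-\epsilon}$ simply have no room to grow polynomially; the theorem has no lower-bound hypothesis on $\abs A$, so the small-set regime (where the original Bourgain--Katz--Tao sum-product estimate's $p^\alpha$ floor would bite) is handled in \cite{Helf} by a separate argument rather than being controlled by $\epsilon$.
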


\noindent
In this section, we will prove the following related result by combining
Theorem~\ref{Helfthm} with Theorem~\ref{redu2}.  A group $G$ is
\emph{metabelian} if $G$ has an abelian normal subgroup $N$
such that the quotient group $G/N$ is also abelian.

\begin{theorem}\label{ourHelf}
Let $A$ be a finite subset of $\SL_2(D)$, where $D$ is a \czid, and let
$\angles A$ be the subgroup generated by $A$.  If $\angles A$ has infinite
cardinality and $\angles A$ is not metabelian, then
$$\abs{AAA} > c \abs{A}^{1+\delta},$$
where $c>0$ and $\delta>0$ are absolute constants.
\end{theorem}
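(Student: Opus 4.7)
The plan is to transport $A$ to $\SL_2(\Zmodp)$ using Theorem~\ref{redu2} and then invoke Theorem~\ref{Helfthm}. Let $S\subseteq D$ be the set of entries of the matrices in $A$. Any ring homomorphism $\qmap:\Z[S]\to\Zmodp$ extends entrywise to a group homomorphism $\Mapq:\SL_2(\Z[S])\to\SL_2(\Zmodp)$ (matrix multiplication is polynomial in entries, and $\det=1$ is preserved because $\qmap(1)=1$), so
\e{
\abs{AAA}\ge \abs{\Mapq(AAA)}=\abs{\Mapq(A)\Mapq(A)\Mapq(A)}.
}
It therefore suffices to find $p$ and $\qmap$ such that $\Mapq(A)$ meets the hypotheses of Theorem~\ref{Helfthm}: (i) $\abs{\Mapq(A)}=\abs A$, (ii) $\angles{\Mapq(A)}=\SL_2(\Zmodp)$, and (iii) $\abs{\Mapq(A)}<p^{3-\epsilon}$ for some fixed $\epsilon>0$. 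Condition~(iii) is automatic once $p$ is large compared to $\abs A$, which is fine because Theorem~\ref{redu2} supplies arbitrarily large primes of positive relative density.

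To engineer (i) and (ii), I build the set $L$ of non-zero elements from two ingredients. First, since $\angles A$ is not metabelian, I fix words $w_1,w_2,w_3,w_4$ in the alphabet $A\cup A^{-1}$ such that $W:=[[w_1,w_2],[w_3,w_4]]\ne I$ in $\SL_2(D)$, and throw the non-zero entries of $W-I$ into $L$. Second, since $\angles A$ is infinite, I fix $121$ pairwise distinct elements $g_1,\dots,g_{121}\in\angles A$, chosen so that $A\subseteq\{g_1,\dots,g_{121}\}$ and each $g_i$ is written as a word in $A\cup A^{-1}$, and throw the non-zero entries of every $g_i-g_j$ into $L$. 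Because inversion in $\SL_2$ merely permutes matrix entries up to sign, every entry of every word in $A\cup A^{-1}$ lies in $\Z[S]$, so $L\subseteq\Z[S]$ as required.

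Theorem~\ref{redu2} applied to this $L$ yields $\qmap$ with $0\notin\qmap(L)$ for arbitrarily large primes $p$. By construction $\Mapq$ is injective on $\{g_1,\dots,g_{121}\}$, which forces $\abs{\Mapq(A)}=\abs A$ and $\abs{\angles{\Mapq(A)}}\ge 121$, while $\Mapq(W)\ne I$ shows that $\angles{\Mapq(A)}$ is not metabelian. The main obstacle is upgrading these two facts into condition~(ii), and I would do so by invoking Dickson's classification of the subgroups of $\SL_2(\Zmodp)$: every proper subgroup is either contained in a Borel or in the normalizer of a torus (all such are metabelian), or is a central extension of $A_4$, $S_4$, or $A_5$ in $\SL_2$ (so has order at most $120$). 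The non-metabelianness of $\angles{\Mapq(A)}$ rules out the first alternative and the cardinality bound $\abs{\angles{\Mapq(A)}}\ge 121$ rules out the second, so $\angles{\Mapq(A)}=\SL_2(\Zmodp)$. Theorem~\ref{Helfthm} then gives $\abs{\Mapq(A)\Mapq(A)\Mapq(A)}>c\abs A^{1+\delta}$, and the first displayed inequality completes the proof. The role of the two hypotheses is transparent in this argument: non-metabelianness is exactly what defeats the generic (Zariski-connected) proper algebraic subgroups of $\SL_2$, while infiniteness is exactly what defeats the finite exceptional subgroups.
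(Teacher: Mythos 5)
Your argument is correct and is essentially the paper's proof: reduce to $\SL_2(\Zmodp)$ via Theorem~\ref{redu2} with an $L$ built from entry-differences (to preserve $\abs{A}$ and to keep $121$ distinct elements of $\angles{A}$ distinct) together with the entries of a witness to non-metabelianness, then rule out proper image subgroups using the Suzuki/Dickson classification (proper subgroups of $\SL_2(\Zmodp)$ of order $>120$ are metabelian) and finish with Helfgott's theorem. Two cosmetic differences: you certify non-metabelianness of the image directly via a double commutator $[[w_1,w_2],[w_3,w_4]]\ne I$ surviving the reduction, whereas the paper uses a non-commuting pair in $N_0=\bigcap\{N\trianglelefteq G: G/N\ \text{abelian}\}$ and pulls back the abelian normal subgroup of the image (your version is equivalent and slightly cleaner); and your stipulation $A\subseteq\{g_1,\dots,g_{121}\}$ is impossible when $\abs{A}>121$, so you should instead impose injectivity on $A$ and on $\{g_1,\dots,g_{121}\}$ separately, as the paper does.
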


\noindent
One should note that Chang \cite{Chang3} has already proven a very similar
product result for $\SL_2(\bb C)$, in which ``metabelian'' is replaced by
``virtually abelian''.  A group $G$ is \emph{virtually abelian} if $G$
has a finite index subgroup $H$ such that $H$ is abelian.
\begin{theorem}[(\cite{Chang3})]\label{ChangHelf}
Let $A$ be a finite subset of $\SL_2(\bb C)$, and let $\angles A$ be the
subgroup generated by $A$.  If $\angles A$ is not virtually abelian (which
implies that $\angles A$ has infinite cardinality), then
$$\abs{AAA} > c \abs{A}^{1+\delta},$$
where $c>0$ and $\delta>0$ are absolute constants.
\end{theorem}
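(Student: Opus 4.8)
The plan is to split on whether $\angles A$ is metabelian, reducing one half to the paper's own Theorem~\ref{ourHelf} and the other to a sum-product/expansion estimate in the affine group of a line. I will use throughout the structural fact that an \emph{infinite} solvable subgroup of $\SL_2(\C)$ is, after conjugation, contained either in a Borel subgroup $B$ or in the normalizer $N(T)$ of a maximal torus (a standard consequence of the Lie--Kolchin theorem and the classification of connected algebraic subgroups of $\SL_2$ together with their normalizers); since conjugation changes neither $\abs A$, $\abs{AAA}$, nor the isomorphism type of $\angles A$, such reductions are harmless.

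\emph{First case: $\angles A$ is not metabelian.} Every finite group is virtually abelian, so the hypothesis forces $\angles A$ to be infinite. Thus $\angles A$ is infinite and not metabelian, and Theorem~\ref{ourHelf} (applied with $D = \C$) gives $\abs{AAA} > c\abs A^{1+\delta}$ with absolute constants directly.

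\emph{Second case: $\angles A$ is metabelian.} Then $\angles A$ is solvable and, being non-virtually-abelian, infinite, so by the structural fact it is conjugate into $B$ or into $N(T)$. But $N(T)$ contains the abelian torus $T$ with index $2$ and is therefore virtually abelian, so $\angles A$ cannot be conjugate into $N(T)$; hence, after conjugating, $A \subseteq B$. Identifying $B$ with the affine group $\C^\times \ltimes \C$ of the line and writing $a_i = (s_i, t_i)$, the hypothesis that $\angles A$ is not virtually abelian is equivalent to saying that $A$ is not (up to a bounded number of cosets) contained in a conjugate of the diagonal torus $\C^\times$ nor in the translation subgroup $\C$: if the group $\langle s_i : i\rangle \le \C^\times$ were finite, $\angles A$ would have a finite-index subgroup inside the abelian translation group; and if the $a_i$ had a common fixed point, $\angles A$ would lie in a point stabilizer, which is a conjugate of $\C^\times$, hence abelian. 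To prove growth here I would invoke the (by now standard) principle that product growth in the affine group of a line is controlled by sum-product and sum-of-dilates phenomena over the underlying ring: considering the action of $A$ on the line and iterating, one relates $\abs{AAA}$ from below to expressions of the form $\abs{(MM)x_0 + MT + T}$ with $M = \{s_i\}$, $T = \{t_i\}$ and $x_0$ a generic base point, and these are $\gg \abs A^{1+\delta}$ precisely because the non-virtually-abelian hypothesis excludes the degeneracies where $M$ or $T$ collapses or $A$ is trapped in a point stabilizer. The sum-product input may be supplied over $\C$ via Theorem~\ref{oursumprod}, or, in the spirit of the rest of the paper, transported to $\Zmodp$ for a positive-density set of primes via Theorem~\ref{redu2} and then obtained from Theorem~\ref{garaevsumprod}.

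The first case is immediate, so the real content is the second, and the main obstacle is making the affine-group growth estimate quantitative with an \emph{absolute} exponent $\delta$. This requires a case analysis separating the regime in which the linear parts $\{s_i\}$ are spread out, where ordinary sum-product applies, from the regime in which they are concentrated, where one instead needs a sum-of-dilates (expansion) bound on $\{t_i\}$; one also needs a non-abelian Ruzsa-type inequality to pass from a lower bound on $\abs{AA}$ to one on $\abs{AAA}$. I expect this to be where the bulk of the work lies, and it is where Chang's argument concentrates its effort.
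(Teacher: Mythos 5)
This theorem is not proved in the paper at all: it is stated as a result of Chang \cite{Chang3} and cited for context, and the paper explicitly remarks that Chang's proof proceeds by adapting Helfgott's methods to $\SL_2(\C)$ rather than by the reduction-to-$\Zmodp$ strategy used for Theorem~\ref{ourHelf}. So there is no ``paper's own proof'' to compare against; you are proposing a new route to a cited result.

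Your route --- split on whether $\angles A$ is metabelian, invoke Theorem~\ref{ourHelf} when it is not, and handle the metabelian-but-not-virtually-abelian case separately --- is a sensible and genuinely different strategy from Chang's. The first half is correct and clean: every finite group is virtually abelian, so non-virtually-abelian forces $\angles A$ infinite, and then Theorem~\ref{ourHelf} over $D=\C$ applies directly. Your structural reduction in the second half is also sound (an infinite solvable subgroup of $\SL_2(\C)$ normalizes the identity component of its Zariski closure, hence sits in $B$ or $N(T)$ after conjugation, and $N(T)$ is excluded because it is virtually abelian), and $BS(1,2)\subset B$ shows this second case is nonempty, so it cannot be skipped.

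The gap is the second case itself. You describe a plan --- relate $\abs{AAA}$ to sum-of-dilates expressions like $\abs{(MM)x_0 + MT + T}$, invoke a sum--product input (Theorem~\ref{oursumprod} or transport via Theorem~\ref{redu2}), use the non-virtually-abelian hypothesis to rule out degeneracies, use a Ruzsa-type inequality to pass between $AA$ and $AAA$ --- but none of this is carried out. In particular: (1) you never prove a quantitative growth estimate in the affine group $\C^\times\ltimes\C$ with an absolute exponent; (2) the claim that non-virtually-abelian is ``equivalent'' to non-degeneracy of $M=\{s_i\}$ and $T=\{t_i\}$ is plausible but unproved, and the hypothesis on $\angles A$ is a statement about the generated group, not directly about the generating set, so translating it into a usable set-theoretic condition on $A$ requires an argument; and (3) sum--product lower bounds on $\abs{A+A}$ or $\abs{AA}$ do not by themselves control $\abs{(MM)x_0+MT+T}$; a sum-of-dilates or Elekes--R\'onyai--type expansion result is a distinct (and in this generality deeper) input that Theorem~\ref{oursumprod} does not supply. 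You acknowledge that this is ``where the bulk of the work lies,'' which is exactly right --- and it is precisely the content of Chang's theorem in the solvable case. As it stands, case 2 is a sketch of a research programme, not a proof, and the proposal does not establish Theorem~\ref{ChangHelf}.
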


There are many groups that are both metabelian and
virtually abelian, for example all abelian groups satisfy both properties.
However, neither property implies the other.  For example, the group $G:=
\prod_{i=1}^\infty S_3$ (the product of infinitely many copies of the
symmetric group on three elements) is metabelian (since $N:=
\prod_{i=1}^\infty \angles{(123)}$ is an abelian, normal subgroup of $G$ such
that $G/N$ is also abelian), but $G$ is not virtually abelian.  On the other
hand, $G:= S_4 \times \Z$ is virtually abelian (since $H:= \angles{(1)} \times
\Z $ is a finite-index abelian subgroup of $G$), but $G$ is not metabelian
(since $S_4$ is not metabelian).

One major difference between Theorem~\ref{ourHelf} and Theorem~\ref{ChangHelf}
is in how the two results are proved.  Below, we will prove
Theorem~\ref{ourHelf} using Helfgott's Theorem~\ref{Helfthm}
%(due to Helfgott \cite{Helf})
as a black box along with some group theory and an easy application of
Theorem~\ref{redu2}.  On the other hand, Theorem~\ref{ChangHelf} is proven in
\cite{Chang3} by adapting Helfgott's methods in \cite{Helf} from the case of
$\SL_2(\Zmodp)$ to $\SL(\C)$ and using tools from additive combinatorics.

The constants $\delta>0$ in Theorems~\ref{ourHelf} and \ref{ChangHelf} are
not the best possible if one restricts to a subgroup.  For example,
$\SL_2(\Z)$ contains a subgroup isomorphic to $F_2$, the free group on 2
generators, and the following product result has recently been shown by
Razborov \cite{Raz}:
\begin{theorem}[(\cite{Raz})]
Let $A$ be a finite subset of a free group $F_m$ (on $m$ generators) with at
least two non-commuting elements.  Then,
$$\abs{AAA} \ge \frac{\abs{A}^2}{(\log\abs A)^{O(1)}}.$$
\end{theorem}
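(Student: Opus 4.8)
The plan is to work directly with reduced words in a free group and to bound $\abs{AAA}$ by reconstructing a triple $(a,b,c)$ from the reduced word of the product $abc$.

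First I would reduce to rank two. Since $A$ contains two non-commuting elements, $\langle A\rangle$ is a non-abelian subgroup of $F_m$, hence itself free of rank $\ge 2$ by the Nielsen--Schreier theorem, and every finitely generated free group of rank $\ge 2$ embeds into $F_2=\langle x,y\rangle$. An injective homomorphism carries $AAA$ bijectively onto the triple product of the image and preserves non-commutativity, so I may assume $A\subseteq F_2$; set $n=\abs A$. I would then view $F_2$ as the set of reduced words in $x^{\pm1},y^{\pm1}$, equivalently the vertices of the $4$-regular Cayley tree, write $\ell(g)$ for word length, and picture the product $g=abc$ of a triple $(a,b,c)\in A^3$ as a broken geodesic $e\to a\to ab\to abc$ whose reduced word is obtained by erasing the two backtracks, say of lengths $k_1$ (cancellation of $a$ against $b$) and $k_2$ (cancellation of $ab$ against $c$).

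The engine of the argument is a reconstruction principle in the generic regime $k_1+k_2<\ell(b)$: there the reduced word of $g$ splits canonically as $g=g_1g_2g_3$, with $g_1$ the uncancelled head coming from $a$, $g_2$ the uncancelled middle coming from $b$, and $g_3$ the uncancelled tail coming from $c$, and from the data $(g,\,b,\,k_1,\,k_2,\,\ell(a))$ one recovers $a$ and $c$ uniquely. Consequently, if one fixes a ``type'' $\tau=(k_1,k_2,\ell(a))$, then the map $(a,b,c)\mapsto g$, restricted to generic triples of type $\tau$, has every fibre of size at most $\abs A$, because the fibre over $g$ injects into the set of admissible middle factors $b\in A$. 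Hence as soon as some single type $\tau$ accounts for a fraction $\rho$ of all $n^3$ triples, one gets $\abs{AAA}\ge\rho\,n^2$, and it remains to produce such a type with $\rho\ge(\log n)^{-O(1)}$.

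This last step is where I expect the real difficulty to concentrate, for two reasons. First, the word lengths of elements of $A$, and hence the cancellation lengths $k_1,k_2$, can range over unboundedly large sets, so pigeonholing naively over types loses far more than a polylogarithmic factor; one must instead show that the ``productive'' triples actually concentrate on only $(\log n)^{O(1)}$ types, presumably via a dyadic organization of the cancellation scales combined with a logarithmic-depth recursion, which is exactly where the $(\log n)^{O(1)}$ enters the bound. Second, one must handle the complementary, degenerate regime $k_1+k_2\ge\ell(b)$, in which $b$ is entirely absorbed and the product collapses to (a prefix of $a$)(a suffix of $c$): here the reconstruction above fails, and the right statement is a dichotomy — either these triples are not too numerous (so the generic analysis already suffices), or a positive proportion of pairs $(a,b)\in A^2$ are forced to share very long prefixes or suffixes, which, iterated, would confine all but a few elements of $A$ to a single coset of a cyclic subgroup, contradicting the presence of two non-commuting elements in $A$. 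Pinning down this dichotomy quantitatively, and matching the ambiguities of the reconstruction to the cyclic structures, is the crux; the reduced-word manipulations themselves are routine.
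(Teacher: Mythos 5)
The paper does not prove this theorem; it quotes it as a black box from Razborov's preprint \cite{Raz} and never touches its proof, so there is no in-paper argument against which to check your sketch. Judged on its own terms, your proposal correctly identifies the natural first move — model $abc$ as a tripod in the Cayley tree of $F_2$, record the two backtrack lengths $k_1,k_2$, observe that when $k_1+k_2<\ell(b)$ the data $(abc,b,k_1,k_2,\ell(a))$ determines $a$ and $c$, and conclude that each ``type'' contributes at most $|A|$ triples per product — but you then explicitly leave open the two steps that actually carry the theorem. First, there is no a priori bound on the word lengths of elements of $A$ in terms of $|A|$, so the number of types is uncontrolled, and the dyadic-plus-recursion scheme you gesture at to compress them to $(\log n)^{O(1)}$ is precisely the hard part; nothing in the sketch produces it. Second, the degenerate regime $k_1+k_2\ge\ell(b)$ is where the structure theory must enter: one needs a quantitative dichotomy showing that an abundance of such triples forces $A$ to cluster near a cyclic (or, more generally, abelian) coset, and that this contradicts the presence of two non-commuting elements at the stated polylogarithmic strength. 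You label this the ``crux'' and stop. Both gaps are genuine; they are not reduced-word bookkeeping but the entire analytic content of Razborov's result, and until they are filled the proposal is a problem decomposition rather than a proof.
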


One should note that neither Theorem~\ref{ourHelf} nor
Theorem~\ref{ChangHelf} fully characterizes finite subsets of $\SL_2(\bb C)$
that have expanding triple product.  For example, neither theorem applies when
$A$ is contained in an abelian subgroup, but letting $$A:=\cbraces{
\parens{\begin{matrix} 1 & 2^j \\ 0 & 1\end{matrix}}: 1\le j \le n },$$ we have that
$\abs{AAA} \ge \abs{AA} = \binom{n+1}{2} > n^2/2 = \abs{A}^2/2.$ One should
also note that a sum-product theorem similar to
Theorem~\ref{oursumprod} does not hold in general for matrices.  As pointed
out in \cite[Remark 0.2]{Chang4}, the subset $$A:=\cbraces{ \parens{\begin{matrix} 1 &
j \\ 0 & 1\end{matrix}}: 1\le j \le n }$$ has the property that both the sumset
and product set are small:  $\abs{A+A} = \abs{AA} = 2n-1$.  However, it is
also shown by Chang \cite{Chang4} that by adding the assumption that the
matrices in $A$ are symmetric, one can prove a sum-product result similar to
Theorem~\ref{oursumprod}.

We now turn our attention to the proof of Theorem~\ref{ourHelf}.

\begin{proof}[Proof of Theorem~\ref{ourHelf}]
Say that $A$ is a finite subset of $\SL_2(D)$, where $D$ is a \czid.  Let $G
:= \angles A$, the subgroup generated by $A$, and assume that $G$ has infinite
cardinality and is not metabelian.  Let $T$ be the set of all normal subgroups
$N$ of $G$ such that $G/N$ is abelian (note that we include $G$ in the set
$T$), and define $$N_0:= \bigcap_{N\in T} N.$$  Then $N_0$ is a normal subgroup
of $G$ and $G/N_0$ is abelian.  Since $G$ is not metabelian by assumption, we
know that $N_0$ is not abelian, and so there exists $B_1,B_2 \in N_0$ such
that $B_1B_2 \ne B_2B_1$.  Also, let $M_1,M_2,M_3,\ldots, M_{121}$ be 121
distinct elements of $G$ (note $G$ is infinite by assumption).  We may now
define a set $L_0$ as follows:
\e{
    L_0 &:= \cbraces{b_i-c_j :
    \mbox{\parbox{3.2in}{$i,j \in \{1,2,3,4\}$ and
    $b_i$ and $c_j$ are entries in matrices $
    \parens{\begin{matrix} b_1& b_2\\ b_3 & b_4\end{matrix}},
    \parens{\begin{matrix} c_1& c_2\\ c_3 & c_4\end{matrix}} \in A$
    }}}\\[12pt]
& \qquad \cup \cbraces{b_i-c_j :
    \mbox{\parbox{4.5in}{$i,j \in \{1,2,3,4\}$ and
    $b_i$ and $c_j$ are entries in matrices \\ $
    \parens{\begin{matrix} b_1& b_2\\ b_3 & b_4\end{matrix}} \in M_{k_1}$ and
    $\parens{\begin{matrix} c_1& c_2\\ c_3 & c_4\end{matrix}} \in M_{k_2}$ for some
    $1\le k_1,k_2 \le 121$
    }}}\\[12pt]
& \qquad \cup \cbraces{b_1-1, b_2, b_3, b_4-1 :
    \mbox{\parbox{3.2in}{where
    $
    \parens{\begin{matrix} b_1& b_2\\ b_3 & b_4\end{matrix}} = B_1B_2B_1^{-1}B_2^{-1}
    \ne
    \parens{\begin{matrix} 1& 0\\ 0 & 1\end{matrix}}$
    }}}.
}
Let $L:=L_0\setminus \{0\}$, and let $S$ be the set of all entries that appear
in matrices in $A$.  By Theorem~\ref{redu2}, there exists $p > \abs
A$ and $\qmap: \Z[S] \to \Zmodp$  such that $0\notin \qmap(L)$.  Let $\Qmap:
\SLt(D) \to \SLt(\Zmodp)$ be defined by $\parens{\begin{matrix} b_1& b_2\\ b_3
& b_4 \end{matrix}} \mapsto\parens{\begin{matrix} \qmap(b_1)& \qmap(b_2)\\
\qmap(b_3) & \qmap(b_4) \end{matrix}}$.
Let $\uA:= \Qmap(A)$ and let $\uG:= \angles{\uA}$.  Note that by construction
$\abs A = \abs{\uA}$ and $\abs{AAA} \ge \abs{\uA\,\uA\,\uA}$, and also note that
$\abs{\uG} \ge 121$.

Assume for the sake of a contradiction that $\uG$ is a proper subgroup of
$\SLt(\Zmodp)$.  In \cite{Suz}, Suzuki gives the following classification of
the proper subgroups of $\SLt(\Zmodp)$:
\begin{theorem}[(cf. Theorem~6.17 of \cite{Suz}, page 404)]\label{suzthm}
Let $\uG$ be a proper subgroup of $\SLtp$ where $p\ge 5$.  Then $\uG$ is
isomorphic to one of the following groups (or to a subgroup of one of the
following groups):
\begin{romanlist}
\ii a cyclic group,

\ii the group with presentation $\angles{x,y \big| x^m=y^2, y^{-1}xy =
x^{-1}}$, which has order $4m$,
% see Example 1, \S 9, Chapter 2, on page 258.
% also note that Chapter 3, \S 6 (6.23) on page 410 says one of these groups
% is always a subgroup of $SL_2(\Zmodp)$.

\ii a group $H$ of order $p(p-1)$ having a Sylow-$p$ subgroup $Q$ such that
$H/Q$ is cyclic and $Q$ is elementary abelian,
% see also Theorem 6.25 page 412, Chapter 3, \S 6.

\ii the special linear group $\SLt(\Z/3\Z)$ on a field of three elements,
which has order 24,

\ii $\widehat S_4$, the representation group of $S_4$ (the symmetric group on 4
letters), which has order 48, or
% by (2.21) Chapter 3, page 301 and by Def 9.1 on page 252, Chapter 2.

\ii the special linear group $\SLt(\Z/5\Z)$ on a field of five elements,
which has order 120.

\end{romanlist}
\end{theorem}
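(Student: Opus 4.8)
The plan is to run the classical argument (due in essence to Dickson), passing through the action of $\SLtp$ on the projective line $\bb P^1(\Zmodp)$ (a set of $p+1$ points); this action has kernel $\{\pm I\}$, so it exhibits $\operatorname{PSL}_2(\Zmodp)=\SLtp/\{\pm I\}$ as a permutation group on $\bb P^1(\Zmodp)$. I would first classify the subgroups of $\operatorname{PSL}_2(\Zmodp)$ geometrically and then lift the answer through the quotient map $\pi\colon\SLtp\to\operatorname{PSL}_2(\Zmodp)$. The facts I will use: $\abs{\SLtp}=p(p^2-1)$, so a Sylow $p$-subgroup has order $p$ and, $p$ being prime, is elementary abelian; the stabilizer in $\SLtp$ of a point of $\bb P^1(\Zmodp)$ is a Borel subgroup $B$ of order $p(p-1)$ whose Sylow $p$-subgroup is normal with cyclic quotient $(\Zmodp)^\times$; the stabilizer of an unordered pair of points is the normalizer of a split maximal torus, of order $2(p-1)$; each nonidentity element of $\SLtp$ is central, unipotent-type (order divisible by $p$, one fixed point on $\bb P^1$), split semisimple (two fixed points), or non-split semisimple (no fixed point on $\bb P^1(\Zmodp)$); and -- the observation that converts the $\operatorname{PSL}$-classification into the $\SL$-classification -- the only element of order $2$ in $\SLtp$ is $-I$, since any other involution would have eigenvalues $1$ and $-1$, hence determinant $-1$.

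I would first dispose of the case $p\mid\abs{\uG}$. Then $\uG$ contains a Sylow $p$-subgroup of $\SLtp$; let $n_p$ be the number of Sylow $p$-subgroups of $\uG$. If $n_p>1$, take two distinct ones; by the $2$-transitivity of $\operatorname{PSL}_2(\Zmodp)$ on $\bb P^1(\Zmodp)$ one may conjugate so that they become the upper- and the lower-triangular unipotent subgroups, and these two subgroups generate $\SLtp$ (between them they contain all elementary matrices, and elementary matrices generate $\SL_2$ over any field), contradicting that $\uG$ is proper. Hence $n_p=1$, so $\uG$ normalizes its Sylow $p$-subgroup and therefore lies in the Borel subgroup $B$ that is its normalizer; since $B$ is itself a group of the type described in item (iii), $\uG$ is isomorphic to a subgroup of such a group.

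Next the case $p\nmid\abs{\uG}$, where the geometry does the work. Now $\uG$ contains no unipotent-type element, so in $\bar\uG:=\pi(\uG)$ every nonidentity element fixes exactly $0$ or $2$ points of $\bb P^1(\Zmodp)$, and every point-stabilizer in $\bar\uG$ is cyclic (it embeds into the cyclic quotient of a Borel). If $\bar\uG$ fixes a point it is cyclic. Otherwise I would run Dickson's orbit count: writing $n_1,\dots,n_k$ for the orders of the point-stabilizers of the $k$ orbits of $\bar\uG$ on $\bb P^1(\Zmodp)$, one has $\sum_i\abs{\bar\uG}/n_i=p+1$, while counting pairs $(\bar g,\omega)$ with $\bar g\neq 1$ and $\bar g\cdot\omega=\omega$ -- each such $\bar g$ fixing exactly two points -- gives $\sum_i(\abs{\bar\uG}/n_i)(n_i-1)=2\,\#\{\bar g\neq 1:\bar g\text{ fixes a point}\}$. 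Analyzing these two identities according to the smallest orbit size -- which is, in essence, the classification of the finite subgroups of $\operatorname{PGL}_2$ over an algebraically closed field, specialized to $\bb P^1(\Zmodp)$, and which is both the longest step and the main obstacle -- forces $k\le 3$ and restricts the multiset of point-stabilizer orders exceeding $1$ to $\varnothing$, $\{2,2\}$, $\{2,3,3\}$, $\{2,3,4\}$, or $\{2,3,5\}$; that is, $\bar\uG$ is cyclic, dihedral $D_n$, $A_4$, $S_4$, or $A_5$.

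Finally I would lift, using that $-I$ is the unique involution of $\SLtp$. If $\bar\uG$ is cyclic, then $\uG$ is a finite abelian group with at most one involution, hence cyclic -- item (i). If $\bar\uG\cong D_n$ with $n\ge 2$, then $D_n$ has more than one involution, so $D_n$ cannot embed in $\SLtp$; hence $-I\in\uG$ and $\uG=\pi^{-1}(D_n)$, which is the dicyclic group $\angles{x,y\mid x^m=y^2,\ y^{-1}xy=x^{-1}}$ of order $4m$ (with $m=n$) -- item (ii). If $\bar\uG$ is $A_4$, $S_4$, or $A_5$, then again each has more than one involution, so $-I\in\uG$ and $\uG=\pi^{-1}(\bar\uG)$ is the corresponding binary polyhedral group -- $\SLt(\Z/3\Z)$, $\widehat S_4$, or $\SLt(\Z/5\Z)$ (the unique central $\{\pm I\}$-extension with a single involution) -- items (iv), (v), (vi). Combined with item (iii) from the first case, this exhausts the proper subgroups. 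The hypothesis $p\ge 5$ is used in two places: $\SLtp$ is then perfect, so $\bar\uG=\operatorname{PSL}_2(\Zmodp)$ would force $\SLtp=[\SLtp,\SLtp]=[\uG,\uG]\subseteq\uG$, i.e.\ $\uG=\SLtp$, contrary to properness; and it rules out the degeneracies at $p=2,3$, where $\operatorname{PSL}_2(\Zmodp)$ is not simple and the list changes.
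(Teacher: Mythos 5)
The paper does not prove Theorem~\ref{suzthm} at all: it is imported as a citation to Suzuki's \emph{Group Theory~I} (\cite{Suz}, Theorem~6.17) and used as a black box inside the proof of Theorem~\ref{ourHelf}, so there is no internal argument against which to compare yours---any correct proof is automatically ``a different route.'' Your sketch is the standard Dickson-style classification: act on $\bb P^1(\Zmodp)$, split on whether $p$ divides $|\uG|$, treat the $p$-divisible case by Sylow theory (one Sylow $p$-subgroup forces $\uG$ into a Borel, two generate all of $\SLtp$), treat the $p$-coprime case by orbit-counting to obtain the polyhedral list for $\operatorname{PSL}_2$, and lift through the centre using that $-I$ is the unique involution in $\SLtp$ for odd $p$. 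That outline is right, and your central-extension bookkeeping for the dicyclic and binary polyhedral cases is correct.

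There is one wrong step. When $\bar{\uG}:=\pi(\uG)$ is cyclic you argue that $\uG$ is abelian (fine: $\uG\cap\{\pm I\}$ is central and $\uG/(\uG\cap\{\pm I\})$ is cyclic, so $\uG/Z(\uG)$ is cyclic), and then assert that ``a finite abelian group with at most one involution is cyclic.'' That implication is false in general: $\Z/3\Z\times\Z/3\Z$ is a finite abelian group with no involution whatsoever, hence with at most one, yet it is not cyclic---having a cyclic or trivial $2$-Sylow constrains only the $2$-part. The conclusion you want is still true, but for a reason specific to the setting: since $p\nmid|\uG|$, every element of $\uG$ is semisimple over $\overline{\bb F_p}$, so an abelian $\uG$ is simultaneously diagonalizable and lies in a maximal torus; the intersection of $\SL_2$ with a maximal torus is a copy of $\overline{\bb F_p}^{\times}$, whose finite subgroups are cyclic. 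Replace the involution count by that diagonalizability argument. You were also right to flag the orbit-count step as the real obstacle: over $\bb F_p$ (unlike over $\overline{\bb F_p}$) a semisimple element may fix zero points of $\bb P^1(\Zmodp)$, so the clean identity $\sum_i(1-1/n_i)=2(1-1/|\bar{\uG}|)$ from the algebraically closed case does not hold verbatim and needs adjustment. The cleanest patch is to run the entire $p\nmid|\uG|$ case over $\bb P^1(\overline{\bb F_p})$ and invoke the classification of finite subgroups of $\SL_2$ over an algebraically closed field of characteristic prime to the order, which is exactly Klein's ADE list.
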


Since $\abs{\uG} > 120$, we may eliminate (iv), (v), and (vi) as
possibilities.
%, and since $\uG$ is not abelian, we may also eliminate (i).
The remaining possibilities (namely, (i), (ii), and (iii)) are all metabelian;
and thus, $\uG$ must have a normal subgroup $\uN$ such that $\uN$ is abelian
and $\uG/\uN$ is also abelian.

Let $N:= \Qmap^{-1}(\uN)$.  Then $N$ is a normal subgroup of $G$, and by the
third isomorphism theorem $G/N \isom (G/\ker(\Qmap))/(N/\ker(\Qmap)) \isom
\uG/\uN$, which is abelian.  Thus, $N_0$ is a subgroup of $N$, and so $B_1,
B_2 \in N$.  We know that $B_1B_2B_1^{-1}B_2^{-1} \ne
\parens{\begin{matrix} 1&0\\0&1\end{matrix}}$, and by the definition of
$\Qmap$, we also have that
$$\Qmap(B_1)\Qmap(B_2)\Qmap(B_1)^{-1}\Qmap(B_2)^{-1} =
\Qmap(B_1B_2B_1^{-1}B_2^{-1}) \ne \parens{\begin{matrix}
1&0\\0&1\end{matrix}}.$$
But, this contradicts the fact that $\uN$ is abelian.  Thus, the assumption
that $\uG$ is a proper subgroup of $\SLtp$ is false, and we have that
$\angles{\uA} = \uG = \SLtp$.

Finally, by Theorem~\ref{Helfthm}, there exist absolute
%(since $\abs{\uA} < p = p^{3-\epsilon}$ with $\epsilon = 2$)
constants $c>0$ and $\delta>0$ such that
$$\abs{AAA} \ge \abs{\uA\,\uA\,\uA} \ge c\abs{\uA}^{1+\delta} =
c\abs{A}^{1+\delta}.$$
\end{proof}

Another way to show that $\Qmap(A)$ generates all of $\SLtp$
would be to assume that $\angles A$ is not virtually solvable, which implies
by Tits Alternative Theorem \cite{Tits} that $\angles A$ has a non-abelian
free subgroup.  Then, following \cite[Section 2]{Gamb}, it is possible to
bound the girth of a certain Cayley graph from below in terms of $p$,
eventually showing (via an appeal to Theorem~\ref{suzthm})
that $\angles{\Qmap(A)}= \SLtp$.

Also, the proof above uses the following implicit corollary of
Theorem~\ref{suzthm}:  if $\uG$ is a proper subgroup of $\SLtp$ and
$\abs{\uG}>120$, then $\uG$ in metabelian.  A very similar result for
$\operatorname{PSL}_2(\Zmodp) \isom \SLtp/(\pm I)$ (where $I$ is the identity
matrix)
%instead of $\SLtp$
appears in \cite[Theorem~3.3.4, page 78]{Valette}.

\section{Random matrices with entries in a \czid } \label{s:mat}

In \cite{KKSz,TV}, it is shown that a random Bernoulli matrix of
size $n$ is singular with probability $\exp(-\Omega (n))$. One may
ask what happens for random matrices with complex entries.  We are
going to give a quick proof of the following:

\begin{theorem} \label{theorem:matrix}
For every positive number  $\rho <1$, there is a positive number
$\delta<1$ such that the following holds. Let $\xi$ be a random
variable with finite support in a \czid, where $\xi$ takes each
value with probability at most $\rho$.  Let $M_n$ be an $n$ by $n$
random matrix whose entries are iid copies of $\xi$. Then the
probability that $M_n$ is singular is at most $\delta ^n$, for all $n$
sufficiently large with respect to $\rho$ and the size of the support of $\xi$.
\end{theorem}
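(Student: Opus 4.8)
The plan is to reduce the complex (or general \czid) singularity problem to the known Bernoulli-type result over $\Zmodp$ via Theorem~\ref{redu2}, exploiting that the determinant is a polynomial in the matrix entries. First I would set up the reduction: let $R$ denote the finite support of $\xi$ inside the \czid\ $D$, a set of size $r$. Each realization of $M_n$ is a matrix with entries drawn from $R$, so $\det M_n$ is an element of the subring $\Z[R]$, and there are only finitely many such determinant values as $M_n$ ranges over all $r^{n^2}$ possible matrices. Here is the subtlety forcing a little care: the set $L$ fed into Theorem~\ref{redu2} must be fixed \emph{before} choosing $p$, but we want $\quotientmap$ to be nonzero on every nonzero determinant. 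So I would take $S := R$ (or $R$ together with $0,1$) and let $L$ be the set of all nonzero values $\det M$ where $M$ ranges over all $n\times n$ matrices with entries in $R$; this is a finite set of nonzero elements of $\Z[R]$. Applying Theorem~\ref{redu2} to this $S$ and $L$ produces infinitely many primes $p$ (in fact a positive-density set) with a ring homomorphism $\quotientmap\colon \Z[R]\to\Zmodp$ such that $0\notin\quotientmap(L)$; fix one such $p$.

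Next I would transport the random model. Because $\quotientmap$ is a ring homomorphism, $\quotientmap(\det M) = \det(\quotientmap(M))$, where $\quotientmap(M)$ means applying $\quotientmap$ entrywise. By the choice of $L$, $\det M = 0$ in $D$ if and only if $\quotientmap(\det M) = 0$ in $\Zmodp$ — the ``only if'' is trivial and the ``if'' is exactly what $0\notin\quotientmap(L)$ buys us, since a nonzero determinant in $D$ lies in $L$ and hence maps to a nonzero element. Therefore $M_n$ is singular over $D$ \emph{if and only if} $\quotientmap(M_n)$ is singular over $\Zmodp$. Now push forward the distribution: let $\eta := \quotientmap(\xi)$, a random variable supported on the finite set $\quotientmap(R)\subseteq\Zmodp$. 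The matrix $\quotientmap(M_n)$ has entries that are iid copies of $\eta$. The key point about $\eta$: for any value $v\in\Zmodp$, $\Pr[\eta = v] = \sum_{s\in R,\ \quotientmap(s)=v}\Pr[\xi = s] \le |R|\cdot\rho$ — wait, that bound is too weak; instead I note $\Pr[\eta=v]\le\Pr[\xi\in\quotientmap^{-1}(v)\cap R]$, and in the worst case this could be as large as $\min(1, |R|\rho)$. To keep a usable bound I would instead observe we may assume WLOG that $\quotientmap$ is injective on $R$: simply add to $L$ all nonzero differences $s_1-s_2$ with $s_1,s_2\in R$, as in the paragraph after Theorem~\ref{redu2}. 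Then $\quotientmap$ is a bijection $R\to\quotientmap(R)$, so $\Pr[\eta=v]=\Pr[\xi=\quotientmap^{-1}(v)]\le\rho$ for every $v$, and $\eta$ takes $|R|$ distinct values in $\Zmodp$.

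Finally I would invoke the known quantitative result. The cited works \cite{KKSz,TV} establish exponential singularity bounds for Bernoulli matrices, and the relevant generalization — a random $n\times n$ matrix over $\Zmodp$ (or over $\Z$, whence mod $p$ by reduction) whose iid entries take finitely many values each with probability at most $\rho<1$ is singular with probability at most $\delta^n$ for a $\delta=\delta(\rho,r)<1$, for $n$ large — is what I would cite or quote as the base case; if the literature version is stated over $\Z$, note that a $\Z$-matrix reduced mod a suitable prime $p$ has the same singularity behavior, or simply apply the $\Zmodp$ Odlyzko-type / Littlewood–Offord machinery directly. Combining: $\Pr[M_n\text{ singular over }D] = \Pr[\quotientmap(M_n)\text{ singular over }\Zmodp] \le \delta^n$, which is the claim. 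The main obstacle, and the only genuinely non-formal step, is ensuring the \emph{order of quantifiers} works: $L$ depends on $n$ and on $R$, hence so does $p$ and so does $\delta$'s admissibility, but that is fine because the theorem allows $\delta$ to depend on $\rho$ and $|R|$ and allows ``$n$ sufficiently large'' — for each fixed $n$ we run the reduction with that $n$, and the $\Zmodp$ bound's $\delta$ depends only on $\rho$ and the number of atoms $r=|R|$, not on $n$ or $p$, so a single $\delta$ works uniformly. I would also remark that one does \emph{not} need the effective/positive-density strength of Theorem~\ref{redu2} here, only the existence of one valid prime $p$ for each $n$.
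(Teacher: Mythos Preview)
Your proposal is correct and matches the paper's own argument essentially step for step: the paper packages your determinant-preserving reduction as a separate Lemma (applying Theorem~\ref{redu2} with $L$ the set of nonzero determinants, and noting injectivity on the support $R$), and then invokes the $\Zmodp$ singularity bound (stated as Theorem~\ref{theorem:matrixfield}, which requires $p\ge 2^{n^n}$, easily arranged since you have infinitely many primes). The only cosmetic differences are that the paper derives injectivity on $R$ from the determinant condition via an explicit matrix rather than by enlarging $L$ with pairwise differences (and explicitly remarks that your approach works too), and that you should note you are free to take $p$ as large as needed when fixing it.
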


\begin{remark}In the case when the \czid\ is $\bb C$, more quantitative
bounds are available (see \cite{BVW,TVunpub}).
\end{remark}

Theorem~\ref{theorem:matrix} follows directly from the following two
results.

\begin{theorem} \label{theorem:matrixfield}
For every positive number  $\rho <1$, there is a positive number
$\delta <1$ such that the following holds. Let $n$ be a large positive
integer and $p \ge 2^{n^n}$ be a prime. Let $\xi$ be a random
variable with finite support in $\Zmodp$, where $\xi$ takes each
value with probability at most $\rho$.  Let $M_n$ be an $n$ by $n$
random matrix whose entries are iid copies of $\xi$. Then the
probability that $M_n$ is singular is at most $\delta ^n$, for all $n$
sufficiently large with respect to $\rho$ and the size of the support of
$\xi$.
\end{theorem}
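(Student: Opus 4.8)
The plan is to adapt the proof over $\mathbb{Q}$ (or $\R$) that a random $\pm 1$ matrix is singular with exponentially small probability, noting that the only obstruction to running that argument in $\Zmodp$ is the possibility of small-support cancellations that do not occur over $\Z$ but might occur modulo $p$; the hypothesis $p \ge 2^{n^n}$ is exactly what rules those out. More precisely, I would follow the Odlyzko-type / Kahn--Komlós--Szemerédi philosophy in the form: condition on the first $n-1$ rows $X_1,\dots,X_{n-1}$ of $M_n$ and let $H$ be the subspace (over the fraction field, here $\Zmodp$ which is already a field) they span. If the rows are linearly dependent we are already in a good (small-probability) event, so assume $\dim H = n-1$, and let $v = (v_1,\dots,v_n)$ be a nonzero normal vector to $H$. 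Then $M_n$ is singular iff $X_n \cdot v = 0$, and by Fubini it suffices to show that for any fixed such $v$ arising this way, $\Pr[X_n\cdot v = 0] \le \delta^n$ (after absorbing the small-probability bad events for the first $n-1$ rows, which are handled by the same inductive bound).

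The heart is a Littlewood--Offord / Erdős-type anticoncentration statement in $\Zmodp$: if $v \in (\Zmodp)^n$ has at least, say, $n/2$ nonzero coordinates, and $\xi$ takes each value with probability at most $\rho$, then $\Pr[\sum_i \xi_i v_i = 0] \le C/\sqrt{n}$ (or at least $\le 1 - c$ for an absolute $c$), \emph{provided} $p$ is large enough that no miraculous cancellation can reduce the effective support. Here is where $p \ge 2^{n^n}$ enters: the normal vector $v$ to the span of $n-1$ rows, each with entries in the finite support of $\xi$, can be taken (by Cramer's rule, clearing denominators) to have coordinates that are $(n-1)\times(n-1)$ determinants of matrices whose entries lie in a fixed finite subset of $\Zmodp$; lifting that finite subset to fixed integers, these determinants, computed over $\Z$, have absolute value at most $(n-1)! \cdot M^{n-1} \ll 2^{n^n}$ where $M$ bounds the lifted support. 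Hence the lift $\tilde v$ of $v$ to $\Z$ is a nonzero integer vector whose reduction mod $p$ is honest: $\tilde v_i \equiv 0 \pmod p$ forces $\tilde v_i = 0$, so $v$ has exactly as many nonzero coordinates as $\tilde v$, and any linear relation $\sum \xi_i v_i \equiv 0 \pmod p$ among small values lifts to $\sum \tilde\xi_i \tilde v_i = 0$ over $\Z$ (the left side being bounded in absolute value by $p$). This reduces the $\Zmodp$ anticoncentration to the classical integer Littlewood--Offord inequality, which is exactly the quantitative input used in \cite{KKSz,TV}.

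With that reduction in hand, I would run the standard argument: split into the "rich" case where $v$ has many nonzero coordinates (handled directly by the Littlewood--Offord bound above, giving $\Pr[X_n \cdot v = 0] = O(1/\sqrt n)$, far better than $\delta^n$) and the "poor" case where $v$ is supported on a small set — but then $v$ being orthogonal to $n-1$ rows with at most $k$ effective coordinates forces those rows to satisfy a degenerate condition that itself happens with probability at most $\delta^n$, via the combinatorial dictionary of \cite{KKSz} (counting, for each small support size, the probability that the row space of the Bernoulli-type matrix hits a structured subspace). The main obstacle, and the only genuinely new point over the integer case, is making the lifting bound airtight: I must ensure that the normal vector can always be chosen with coordinates that are bounded-size determinants (this is just Cramer's rule applied to an $(n-1)\times n$ matrix of full rank, taking the signed maximal minors), and that the constant $M$ bounding the lifted support does not interact badly with the requirement "$n$ sufficiently large with respect to the size of the support" — but since $|{\rm supp}(\xi)|$ and hence $M$ are fixed before $n \to \infty$, the crude bound $(n-1)! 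M^{n-1} < 2^{n^n}$ holds for all large $n$, so $p \ge 2^{n^n}$ suffices. Everything else is a transcription of the integer proof, since $\Zmodp$ is a field and linear algebra behaves identically.
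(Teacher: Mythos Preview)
The paper does not give its own proof of this statement; it simply records that the result ``was implicitly proved in \cite{TV}'' and that the bound $p\ge 2^{n^n}$ is not essential but merely guarantees that $p$ is much larger than $n$. The intended argument is to run the Fourier-analytic machinery of \cite{TV} directly over $\Zmodp$, replacing the integral identity $\I_{x=0}=\int_0^1 e^{2\pi i x t}\,dt$ by its discrete analogue $\I_{x=0}=p^{-1}\sum_{t=0}^{p-1}e^{2\pi i x t/p}$; the largeness of $p$ is what keeps the generalized arithmetic progressions in the inverse Littlewood--Offord step from wrapping around the cyclic group. So your overall framework (condition on $n-1$ rows, study $\Pr[X_n\cdot v=0]$ for a normal vector $v$, split into rich/poor cases) is exactly the \cite{KKSz,TV} skeleton the paper has in mind.

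The gap is in your lifting step. You write that ``$|\operatorname{supp}(\xi)|$ and hence $M$ are fixed before $n\to\infty$,'' but this conflates the \emph{cardinality} of the support with the \emph{magnitude} of its integer lifts. The hypotheses bound only $|\operatorname{supp}(\xi)|$; the support itself is an arbitrary finite subset of $\Zmodp$, chosen after $p$, and any integer lift can only be guaranteed to satisfy $M\le p/2$. With $M$ of that size your key inequality $n!\,M^{\,n}<p$ (needed so that $\sum\tilde\xi_i\tilde v_i\equiv 0\pmod p$ forces equality over $\Z$) fails outright, and the Cramer's-rule bound on $\tilde v$ is useless. Nor can this be repaired by an affine renormalisation of the support: scaling all entries by a unit preserves singularity but gives only one degree of freedom (a Dirichlet-type argument yields at best $M\lesssim p^{1-1/k}$, still far too large), while translating all entries by a constant does not preserve singularity at all. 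The honest fix is to drop the lift and carry out the anticoncentration and structural counting inside $\Zmodp$ itself, which is exactly the route the paper defers to \cite{TV}.
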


This theorem was implicitly proved in \cite{TV}. The bound $2^{n^n}$
is not essential, we simply want to guarantee that $p$ is much larger
than $n$. The reason that the proof from \cite{TV} does not extend
directly to the complex case (or \czid s in general) is that in
\cite{TV} one relied on the identity

$$ \I_ {x=0} = \int_0^1 \exp(2\pi i xt) dt, $$

\noindent where $\I$ is the indicator function.  This identity holds for $x$
an integer, but it is not true for complex numbers in general.
Theorem~\ref{redu2} provides a simple way to overcome this obstacle. (For
other methods, see \cite{TVunpub,TVcir}.)

\begin{lemma}\label{reduction theorem}
Let $S$ be a finite subset of a characteristic zero integral domain.  There
exist arbitrarily large primes $p$ such that there is a ring homomorphism
$\qmap:\Z[S] \to \Zmodp$ satisfying the following two properties:
\begin{enumerate}
\item[(i)]  the map $\qmap$ is injective on $S$, and
\item[(ii)]  for any $n$ by $n$ matrix $(s_{ij})$ with entries $s_{ij}\in S$,
we have $$\det(s_{ij}) = 0 \quad\mbox{ if and only if } \quad
\det\left(\qmap(s_{ij})\right)=0.$$
\end{enumerate}
\end{lemma}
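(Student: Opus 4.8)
The plan is to derive Lemma~\ref{reduction theorem} as a direct consequence of Theorem~\ref{redu2} by choosing the auxiliary set $L$ cleverly. The key observation is that both properties (i) and (ii) amount to asking that $\qmap$ be non-vanishing on a certain finite collection of elements of $\Z[S]$. For property (i), injectivity on $S$ is equivalent to $\qmap(s-s')\neq 0$ for all distinct $s,s'\in S$, so I would include $\{s-s': s,s'\in S,\ s\neq s'\}$ in $L_0$. For property (ii), the crucial point is that an $n\times n$ determinant with entries in $S$ is a polynomial-with-integer-coefficients expression in those entries, hence lies in $\Z[S]$, and a ring homomorphism commutes with this polynomial expression: $\det(\qmap(s_{ij}))=\qmap(\det(s_{ij}))$. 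Therefore, if $\det(s_{ij})\neq 0$ in $D$, we get $\det(\qmap(s_{ij}))=0$ precisely when $\qmap$ kills the nonzero element $\det(s_{ij})\in\Z[S]$. So I would also throw into $L_0$ the set of all determinants $\det(s_{ij})$ as $(s_{ij})$ ranges over $n\times n$ matrices with entries in $S$, for every $n$ from $1$ up to $|S|^2$ (or simply all $n\ge 1$; since $S$ is finite there are only finitely many distinct such determinants regardless).

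Having formed $L_0$, I set $L:=L_0\setminus\{0\}$, which is a finite set of nonzero elements of $\Z[S]$, so Theorem~\ref{redu2} applies and produces arbitrarily large primes $p$ (indeed infinitely many of positive relative density, in particular arbitrarily large ones) together with a ring homomorphism $\qmap:\Z[S]\to\Zmodp$ with $0\notin\qmap(L)$. It then remains to verify (i) and (ii). Property (i) is immediate: for $s\neq s'$ in $S$, the element $s-s'$ is nonzero (as $D$ has no zero divisors and these are distinct elements), hence lies in $L$, hence $\qmap(s)-\qmap(s')=\qmap(s-s')\neq 0$. For property (ii), the ``only if'' direction ($\det(s_{ij})=0 \Rightarrow \det(\qmap(s_{ij}))=0$) is automatic because $\qmap$ is a ring homomorphism and $\det(\qmap(s_{ij}))=\qmap(\det(s_{ij}))=\qmap(0)=0$. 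The ``if'' direction uses the design of $L$: if $\det(s_{ij})\neq 0$, then $\det(s_{ij})\in L$ (it is one of the determinants we included, and it is nonzero), so $\qmap(\det(s_{ij}))\neq 0$, and again by the homomorphism property this equals $\det(\qmap(s_{ij}))$, which is therefore nonzero.

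The only genuinely substantive point to get right is the claim that the determinant of a matrix with entries in $S$ lies in $\Z[S]$ and is respected by ring homomorphisms — this is just the Leibniz formula $\det(s_{ij})=\sum_{\sigma\in \mathfrak S_n}\operatorname{sgn}(\sigma)\prod_i s_{i\sigma(i)}$, which exhibits the determinant as a universal integer polynomial in the entries, and any ring homomorphism commutes with such a polynomial; I would state this explicitly but not belabor it. A secondary bookkeeping point is finiteness of $L$: although I index over all $n\ge 1$ and all matrices, since $S$ is finite there are only finitely many matrices of each size up to $n=|S|^2$ that have distinct entry-patterns, and for $n>|S|^2$ some row or column must repeat an entry-value pattern — but more simply, one need only observe that for the application at hand $n$ is a fixed bound, or alternatively note that the set of values $\{\det(s_{ij})\}$ taken over all sizes is a subset of $\Z[S]$ and one can take $L_0$ to be just the finite union over $n=1,\dots,|S|^2$, which already suffices since any singular/nonsingular determinant question for larger $n$ can be handled, or one simply restricts to the $n$ that arises. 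I expect no real obstacle here; the lemma is essentially a packaging of Theorem~\ref{redu2} with the right choice of $L$, and the ``main'' work is simply recognizing that polynomial identities over $\Z$ (here, the determinant) are preserved by ring homomorphisms.
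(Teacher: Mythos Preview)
Your proposal is correct and follows essentially the same route as the paper: set $L$ to be the nonzero determinants of $n\times n$ matrices with entries in $S$ (here $n$ is fixed from context, which resolves your finiteness worry), apply Theorem~\ref{redu2}, and use that a ring homomorphism commutes with the Leibniz determinant polynomial. The one minor difference is that you also throw the pairwise differences $s-s'$ into $L$ to obtain (i) directly, whereas the paper instead derives (i) from (ii) by exhibiting an explicit matrix over $S$ with determinant $(s-t)^{n-1}t$; the paper itself remarks immediately after the proof that your route is an equally valid alternative.
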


\begin{proof}
Let $ L := \{ \det(s_{ij}): s_{ij}\in S\}\setminus \{0\}$.
Applying Theorem~\ref{redu2} gives us a ring homomorphism $\qmap: \Z[S] \to
\Zmodp$ (for some arbitrarily
large prime $p$) such that $0\notin\qmap(L)$.  Since $\qmap$ is a ring
homomorphism, $\qmap(\det(s_{ij})) = \det(\qmap(s_{ij}))$ and also $\qmap(0)=
0$; thus, we have satisfied condition (ii).

In this particular case, we will show that (i) follows from (ii).  If $S$
contains more than one element, we can find $s \ne t \ne 0$ both lying in
$S$, and thus
\[
\det\lt(\parens{\begin{matrix}
s & t & \cdots &t& t \\
t & s & t &\cdots & t \\
\vdots &t & \ddots &t& \vdots\\
t &\cdots &t & s& t\\
t &t& \cdots & t& t
\end{matrix}}\rt)
=
\det\lt(\parens{\begin{matrix}
s-t & 0 & \cdots &0& 0 \\
0 & s-t &0& \cdots & 0 \\
\vdots &0 & \ddots &0& \vdots\\
0 &\cdots &0& s-t& 0\\
0 &0&\cdots & 0& t
\end{matrix}}\rt)
= (s-t)^{n-1} t \ne 0.
\]
Thus, by (ii), $0 \ne
%\qmap\left( (s-t)^{n-1}t\right) =
\left(\qmap(s)-\qmap(t)\right)^{n-1}\qmap(t)$, and so $\qmap(s)\ne
\qmap(t)$ and we see that $\qmap$ is injective on $S$.
\end{proof}

The fact that (ii) happens to imply injectivity on $S$ is not
important---in fact, for any given finite subset $A\subset \Z[S]$ we can find
$\phi_{\widetilde Q}$ satisfying (ii) above that is also injective on $A$ by
adding $\{a_1-a_2: a_1\ne a_2 \mbox{ and } a_1, a_2\in A\}$ to $\widetilde L$
in the proof above.  For example, we could find $\phi_{\widetilde Q}$ that is
injective on the set of all determinants of $n$ by $n$ matrices with entries
in $S$.

One should note that it is easy to prove results similar to
Lemma~\ref{reduction theorem}  where the determinant is replaced by some
polynomial $f(x_1,x_2,\ldots,x_k)$ with integer coefficients and one wants
a map $\qmap$ such that $f$ evaluated at points in $S$ is zero if and only if
$f$ evaluated at points in $\qmap(S)$ is zero.  This can also easily be
extended to the case where $f$ is replaced by a list of polynomials, each of
which is evaluated on some subset of $S$.

\section{The density theorem}

The number 7 is a prime in the ring of integers $\bb Z$; however, if one
extends $\Z$ to $\Z[\sqrt 2]$, the prime 7 splits: $7 = (3 -\sqrt 2)(3 + \sqrt
2)$.  This fact has the same mathematical content as the following: the
polynomial $x^2-2$ is irreducible in $\Z[x]$; however, in $(\Z/7\Z)[x]$, where
the coefficients of the polynomial are viewed as elements of $\Z/7\Z$, the
polynomial splits: $x^2-2 = (x - 3)(x + 3)$.  The Frobenius Density Theorem
describes how frequently such splitting occurs.  In modern formulations, the
Frobenius Density Theorem quantifies the proportion of primes that split in a
given Galois extension of the rational numbers.  We will use the following
historical version given in \cite[page 32]{Stev}, which is phrased in terms of
polynomials splitting modulo $p$.  Note that the relative density of a set of
primes $S$ is defined to be $$\lim_{x\to\infty} \frac{\abs{\{p\le x: p\in
S\}}}{\abs{\{p\le x: p\mbox{ is prime}\}}}.$$

\begin{theorem}[(Frobenius Density Theorem)]\label{frob}
Let $g(z) \in \Z[z]$ be a polynomial of degree $k$ with $k$ distinct roots in
$\bb C$, and let $G$ be the Galois group of the polynomial $g$, viewed as a
subgroup of $S_{k}$ (the symmetric group on $k$ symbols).  Let
$n_1,n_2,\ldots, n_t$ be positive integers summing to $k$.  Then, the relative density
of the set of primes $p$ for which $g$ modulo $p$ has a given decomposition
type $n_1,n_2,\ldots, n_t$ exists and is equal to $1/\abs{G}$ times the number
of $\sigma \in G$ with cycle pattern $n_1,n_2,\ldots,n_t$.
\end{theorem}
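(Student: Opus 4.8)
The plan is to deduce the Frobenius Density Theorem from two classical facts: Dedekind's theorem, which matches the factorization type of $g$ modulo $p$ with the cycle structure of a Frobenius element, and an analytic density statement for Frobenius elements (Frobenius's original density theorem, a weak form of the Chebotarev density theorem). To set up, note that since $g$ has $k$ distinct roots in $\bb C$ it is separable; let $K\subset\bb C$ be its splitting field over $\Q$, with ring of integers $\mc O_K$, and identify $G=\mathrm{Gal}(K/\Q)$ with its image in $S_k$ under the faithful action on the $k$ roots. Let $\Delta$ be the finite set of primes dividing the leading coefficient or the discriminant of $g$; for $p\notin\Delta$ the prime $p$ is unramified in $K$ and $\bar g\in\Zmodp[z]$ is separable of degree $k$. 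For such $p$, any prime $\fk P$ of $\mc O_K$ lying over $p$ has cyclic decomposition group generated by a Frobenius automorphism, and as $\fk P$ varies over the primes above $p$ these automorphisms sweep out a single conjugacy class $C_p\subseteq G$. Since $\Delta$ is finite it contributes nothing to any density and may be discarded.

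Next I would apply Dedekind's factorization theorem: for $p\notin\Delta$, the degrees of the irreducible factors of $\bar g$ over $\Zmodp$ are exactly the lengths of the cycles of any element of $C_p$ acting on the roots. Hence $g$ modulo $p$ has decomposition type $n_1,\ldots,n_t$ if and only if the elements of $C_p$ have cycle pattern $n_1,\ldots,n_t$. As having a prescribed cycle pattern is a conjugation-invariant condition, this says $g$ modulo $p$ has the stated decomposition type precisely when $C_p$ is contained in the union $U$ of all conjugacy classes of $G$ whose elements have cycle pattern $n_1,\ldots,n_t$. Thus the set of primes in question (up to the density-zero set $\Delta$) is the set of $p$ with $C_p\subseteq U$.

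It remains to prove that the density of primes $p$ with $C_p\subseteq U$ equals $|U|/|G|$, and this is the one genuinely hard step; everything above is formal once Dedekind's theorem is granted. For a single conjugacy class this is the Chebotarev density theorem; for a union of classes stable under the ``same cycle type'' relation, which is all that is needed here, it is Frobenius's original theorem, provable by comparing the orders of the poles at $s=1$ of the Dedekind zeta function of $K$ with those of the zeta functions of the subfields $K^{\langle\sigma\rangle}$, together with the factorization of these into $L$-functions and the non-vanishing of Hecke $L$-functions on the line $\mathrm{Re}(s)=1$. Granting this, the density of primes for which $g$ modulo $p$ has decomposition type $n_1,\ldots,n_t$ is $|U|/|G|=\tfrac1{|G|}\cdot\#\{\sigma\in G:\sigma\text{ has cycle pattern }n_1,\ldots,n_t\}$, as claimed. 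Because a complete proof of this last step requires the machinery of $L$-functions, in a paper of this kind one simply invokes the result as a black box (here via \cite{Stev}).
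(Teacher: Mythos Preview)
The paper does not prove this theorem at all: it is stated as a classical result, attributed to Frobenius (1880/1896) via the exposition in \cite{Stev}, and then used as a black box in the proof of Theorem~\ref{redu2}. Your outline---Dedekind's theorem to match factorization types with Frobenius cycle types, then the analytic density statement for conjugacy-invariant sets of Frobenius elements---is the standard route to the result and is correct at the level of a sketch; you yourself note in the last sentence that a paper of this kind would simply cite it, which is exactly what the authors do. So there is no discrepancy to report: your proposal supplies strictly more than the paper, which contains no argument for this statement.

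One small technical caveat if you ever want to flesh the sketch out: the set $\Delta$ of primes to exclude should be chosen so that Dedekind's factorization criterion genuinely applies. When $g$ is not monic or not irreducible this requires a little care (e.g.\ reducing to the monic irreducible factors, and excluding primes dividing the index $[\mc O_{\Q(\alpha)}:\Z[\alpha]]$ for a root $\alpha$, not merely the polynomial discriminant). This only changes $\Delta$ by finitely many primes and so does not affect any density, but it is worth stating precisely if you write a full proof.
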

\noindent
For example, since the identity element corresponds to the cycle pattern
$1,1,\ldots,1$ and every group has one identity, the relative density of primes $p$ such
that $g$ decomposes into $k$ distinct linear factors modulo $p$ is
$1/\abs{G}$.

Theorem~\ref{frob} is the version proven by Frobenius in 1880 and published in
1896 \cite{dasfrob}.  In \cite{Stev}, Stevenhagen and Lenstra give numerous
examples and an illuminating discussion of the original motivation for the
Frobenius Density Theorem and how it relates to the stronger Chebotarev
Density Theorem.

%One should note that the Frobenius Density Theorem---which is the main step in
%the proof of Theorem~\ref{redu2}---is stronger than we need.  To prove
%Theorem~\ref{redu2}, we only need that there exist infinitely many primes
%of a certain sort, not, as the Frobenius Density Theorem gives, that those
%primes have a positive density among all primes.

\section{Proof of Theorem~\ref{redu2}} \label{s:proofs}

The first step towards proving Theorem~\ref{redu2} is proving the
following lemma.

\begin{lemma}
\label{redu1}
Let $S$ be a finite subset of a characteristic zero integral domain $D$, and
let $L$ be a finite set of non-zero elements in the subring $\Z[S]$ of $D$.
Then there exists a complex number $\theta$ that is algebraic over $\bb Q$ and
a ring homomorphism $\phi: \Z[S]\to\Z[\theta]$ such that $0\notin \phi(L) $.
\end{lemma}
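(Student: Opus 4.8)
The plan is to realize $\Z[S]$ as a quotient of a polynomial ring over $\Z$ and then specialize the formal variables to algebraic numbers so that none of the (finitely many) elements of $L$ specializes to zero. Concretely, write $S = \{s_1,\ldots,s_n\}$ and let $R := \Z[x_1,\ldots,x_n]$ be the polynomial ring in $n$ indeterminates. There is a surjective ring homomorphism $\pi: R \to \Z[S]$ sending $x_i \mapsto s_i$. For each $\ell \in L$, pick a polynomial $P_\ell \in R$ with $\pi(P_\ell) = \ell$; since $\ell \neq 0$ in the integral domain $D$ and $\Z[S] \subseteq D$, the polynomial $P_\ell$ is not in the kernel of $\pi$, but more importantly I will not need to understand $\ker\pi$ at all — I only need a single point at which all the $P_\ell$ are nonzero and which is ``compatible'' with the relations defining $\Z[S]$.

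The key idea is that $D$ itself, being a characteristic zero integral domain, contains $\Z$, and its fraction field $\mathrm{Frac}(D)$ contains $\Q$; so $\mathrm{Frac}(D)$ is a field extension of $\Q$, and the tuple $(s_1,\ldots,s_n)$ is a point of $\mathbb{A}^n$ over this field at which every $P_\ell$ is nonzero (because $P_\ell(s_1,\ldots,s_n) = \ell \neq 0$). First I would reduce to a finitely generated situation: let $K$ be the subfield of $\mathrm{Frac}(D)$ generated over $\Q$ by $s_1,\ldots,s_n$. Then $K$ has finite transcendence degree over $\Q$, say with transcendence basis $t_1,\ldots,t_d$ among the $s_i$ (after reindexing), and $K$ is a finite algebraic extension of $\Q(t_1,\ldots,t_d)$. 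Now I want to specialize the transcendental part: choose a rational (or algebraic) point $(a_1,\ldots,a_d) \in \Q^d$ such that, viewing each $P_\ell$ as a polynomial in $x_1,\ldots,x_d$ with coefficients in $K$ (after using the algebraic relations to express the remaining $s_i$ in terms of $t_1,\ldots,t_d$ and a primitive element), the specialization stays nonzero — possible because a nonzero polynomial over an infinite field does not vanish identically, and one can clear denominators and avoid the finitely many bad hypersurfaces coming from the $P_\ell$ and from the denominators introduced by the algebraic relations. After this specialization, every $s_i$ maps to an element of $\overline{\Q}$, hence into $\Z[\theta]$ for a single algebraic number $\theta$ by the primitive element theorem (clearing denominators to land in the ring $\Z[\theta]$ rather than $\Q(\theta)$, possibly after rescaling $\theta$ by an integer).

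The composite $\Z[S] \xleftarrow{\pi} R \to \Z[\theta]$ must be checked to factor through $\Z[S]$, i.e. that every relation among the $s_i$ is preserved by the specialization. This is the step I expect to be the main obstacle, and it is cleanest to handle it by building the map in the other direction: rather than specializing $R$ and hoping it descends, I would specialize the inclusion $\Z[S] \hookrightarrow K \hookrightarrow \overline{\Q(t_1,\ldots,t_d)}$ by a ring homomorphism $K_0 \to \overline{\Q}$ defined on the subring $K_0 = \Z[s_1,\ldots,s_n,\{\text{needed denominators}\}]$ of $K$, coming from a $\Q$-point of the corresponding affine variety lying outside the finitely many hypersurfaces $\{P_\ell = 0\}$ and outside the hypersurface where the chosen denominators vanish. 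Such a homomorphism exists because $K_0$ is a finitely generated $\Z$-algebra that is a domain of finite type over $\Q$ after inverting an integer, its $\overline{\Q}$-points are Zariski dense, and the conditions to avoid are a proper closed subset; restricting this homomorphism to $\Z[S]$ gives $\phi$ directly, and $0 \notin \phi(L)$ by construction. The output lands in some $\Z[\theta]$ after collecting all the algebraic numbers that appear into a single primitive element and clearing the finitely many denominators. The one genuine subtlety is ensuring the image lies in $\Z[\theta]$ and not merely in $\Q(\theta)$; this is arranged by noting that a ring homomorphism from a finitely generated $\Z$-algebra to $\overline{\Q}$ has image a finitely generated $\Z$-algebra inside a number field, hence contained in $\tfrac{1}{m}\mathcal{O}$ for some integer $m$ and some order $\mathcal O = \Z[\alpha]$, and then replacing $\theta := m\alpha$ (or passing to $\Z[\alpha, 1/m]$ and absorbing $1/m$ — but since the statement wants $\Z[\theta]$, the rescaling trick is the right move, as $\Z[1/m,\alpha] = \Z[\theta']$ for a suitable $\theta'$ as well). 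I would present it via the Nullstellensatz / density-of-rational-points argument since that keeps the relation-preservation automatic.
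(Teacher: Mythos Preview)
Your proposal is correct and follows essentially the same three-step strategy as the paper: realize $\Z[S]$ as (a subring of) the coordinate ring of an affine variety over $\Q$, apply the Nullstellensatz to produce a $\overline{\Q}$-point at which the product of the elements of $L$ does not vanish, and then use the primitive element theorem together with a denominator-clearing trick to land in some $\Z[\theta]$. The one organizational difference is that the paper first invokes the primitive element theorem to embed $\Z[S]$ into a \emph{hypersurface} ring $\Z[y_1,\ldots,y_{j+1}]/(f_0)$ and only then appeals to the Nullstellensatz on $\{f_0=0\}$, whereas you apply the Nullstellensatz (phrased as ``$\overline{\Q}$-points are Zariski dense and the bad locus is a proper closed subset'') directly to the spectrum of a suitable localization of $\Z[S]$; your route is marginally more streamlined, since the reduction to a hypersurface is not actually needed for the Nullstellensatz step.

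Two small technical remarks. First, the containment ``image $\subset \tfrac{1}{m}\mathcal{O}$'' should read ``image $\subset \mathcal{O}[1/m]$'', since $\tfrac{1}{m}\mathcal{O}$ is not a ring. Second, the assertion $\Z[\alpha,1/m]=\Z[\theta']$ for a suitable $\theta'$ is true but is not entirely automatic; it is exactly the denominator-clearing maneuver the paper performs when it replaces the primitive element $\widetilde\theta$ by $\widetilde\theta$ divided by a product of denominators, and you should spell it out with the same care.
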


By itself, this lemma allows one to extend sum-product and incidence problem
results proven in the complex numbers to any \czid\ (in much the same way that
Theorem~\ref{redu2} allows one to extend such results proven in $\Zmodp$ to any
\czid).

Lemma~\ref{redu1} is proved using three main steps: applying the primitive
element theorem, applying Hilbert's Nullstellensatz to pass to the case of
only algebraic numbers, and applying the primitive element again to get to a
ring of the form $\Z[\theta]$.  Each of these three steps requires negotiating
between the rings we are interested in and their fraction fields.
Theorem~\ref{redu2} is proved by combining Lemma~\ref{redu1} with the
Frobenius Density Theorem (or the stronger Chebotarev Density Theorem) to pass
to a quotient isomorphic to $\Zmodp$.

\begin{remark}[An effective version of
Theorem~\ref{redu2}]\label{RemEffective}
It would be interesting to prove a version of Theorem~\ref{redu2} that
included an upper bound on at least one (or more) of the primes $p$ (in
terms of $S$ and $L$) for which desired homomorphism $\quotientmap$ exists.
One possible program for proving such a result would be to follow the
general outline of the proof of Theorem~\ref{redu2} given in this section,
combined with effective versions of the primitive element theorem, Hilbert's
Nullstellensatz, and the Chebotarev Density Theorem.
\end{remark}

\begin{proof}[Proof of Lemma~\ref{redu1}]
Let $S$ be a finite subset of a characteristic zero integral domain $D$.
Recall that we identify the subring of $D$ generated by the identity with $\Z$
and so we use $\Z[S]$ to denote the smallest subring of $D$ containing $S$.

We can write $S=\{x_1,x_2,\ldots,x_j,\theta_1,\theta_2,\ldots,\theta_k\}$,
such that $\{x_1,x_2,\ldots,x_j\}$ are independent transcendentals over $\Q$
and such that $K$, the fraction field of $\Z[S]$, is algebraic over
$\Q(x_1,x_2,\ldots,x_j)$.
%(see, as an example,  Dummit and Foote top of page 627)
Using the primitive element theorem, we can find $\widetilde \theta$ in $K$
also algebraic over $\Q(x_1,x_2,\ldots,x_j)$ such that
$$\Q(x_1,x_2,\ldots,x_j,\theta_1,\theta_2,\ldots,\theta_k) = 
\Q(x_1,x_2,\ldots,x_j,\widetilde \theta).
$$
To get the analogous statement for $\Z$ instead of $\Q$, 
we write, for each $i$ $$\theta_i=\sum_k \frac{f_{i,k}}{g_{i,k}}\widetilde{\theta}^k,$$ 
where $f_{i,k},g_{i,k}\in \Z[x_1,x_2,\ldots,x_j]$,
and
we then define $\theta_0$ to be $\widetilde \theta$
divided by the
product of the $g_{i,k}$.
Thus, we can find $\theta_0$ in $K$ also algebraic over
$\Q(x_1,x_2,\ldots,x_j)$ such that
$$\Z[S]\subset \Z[x_1,x_2,\ldots,x_j,\theta_0]\isom
\Z[y_1,y_2,\ldots,y_{j+1}]/f_0,$$ where the $y_i$ are formal variables and
$f_0$ is an irreducible element in $\Z[y_1,y_2,\ldots,y_{j+1}]$ that is
non-constant or zero and that gives zero when evaluated at $y_i = x_i$ for
$i=1,\ldots,j$ and $y_{j+1}=\theta_0$.

Let $\Qbar$ be the algebraic closure of the rational numbers, let $\mc L':=
\prod_{\ell \in L} \ell$, and let $\mc L \in \Z[y_1,\ldots, y_{j+1}]$ be the
lowest degree representative of the image of $\mc L'$ under the above
inclusion and isomorphism.  We will use the following corollary to Hilbert's
Nullstellensatz:

\begin{proposition}[c.f. the corollary on page 282 of \cite{Shaf}]
%Shafarevich \emph{Basic Algebraic Geometry} ]
\label{nullstell}
If $\mc L, f_0 \in \Qbar[y_1,\ldots, y_{j+1}]$ and if on points of
$\Qbar^{j+1}$ we have that $\mc L$ is zero whenever
$f_0$ is zero, then there exists $m \ge 0$ and $k\in
\Qbar[y_1,\ldots,y_{j+1}]$ such that $\mc L^m = k f_0$.
\end{proposition}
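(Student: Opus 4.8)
The plan is to recognize this as (a special case of) Hilbert's strong Nullstellensatz over the algebraically closed field $\Qbar$ and to derive it from the weak Nullstellensatz by means of the Rabinowitsch trick.

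First I would dispose of the degenerate cases. If $f_0 = 0$, the hypothesis forces $\mc L$ to vanish at every point of $\Qbar^{j+1}$; since $\Qbar$ is infinite, this means $\mc L$ is the zero polynomial, and we may take $m = 1$ and $k = 0$. If $\mc L = 0$ the same choice works. So assume henceforth that $f_0 \neq 0$ and $\mc L \neq 0$.

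Next I would introduce a fresh formal variable $y_0$ and consider the ideal $J := (f_0,\ 1 - y_0 \mc L)$ inside $\Qbar[y_0, y_1, \ldots, y_{j+1}]$. The key observation is that $J$ has no common zero in $\Qbar^{j+2}$: at any point where $f_0$ vanishes, the hypothesis gives $\mc L = 0$ there as well, so the generator $1 - y_0 \mc L$ takes the value $1$. The weak Nullstellensatz (valid because $\Qbar$ is algebraically closed) then forces $J = (1)$, so there are polynomials $a, b \in \Qbar[y_0, \ldots, y_{j+1}]$ with $1 = a f_0 + b(1 - y_0 \mc L)$. Passing to the field of fractions $\Qbar(y_1, \ldots, y_{j+1})$ --- legitimate since $\mc L$ is now a unit --- and substituting $y_0 = \mc L^{-1}$ kills the second term, leaving $1 = a(\mc L^{-1}, y_1, \ldots, y_{j+1})\, f_0$. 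Letting $m$ be the highest power of $y_0$ occurring in $a$ and multiplying through by $\mc L^m$ clears all denominators, yielding $\mc L^m = k f_0$ with $k := \mc L^m \, a(\mc L^{-1}, y_1, \ldots, y_{j+1}) \in \Qbar[y_1, \ldots, y_{j+1}]$, as desired.

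The only substantive ingredient here is the weak Nullstellensatz over $\Qbar$, which I would take as a known black box; the rest is the standard Rabinowitsch manipulation. The one place that calls for care --- the \emph{main obstacle}, such as it is --- is making the substitution $y_0 \mapsto \mc L^{-1}$ rigorous: one must carry it out in the fraction field $\Qbar(y_1, \ldots, y_{j+1})$ and then clear denominators, which is exactly what the choice of the exponent $m$ accomplishes.
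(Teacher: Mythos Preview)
Your argument is correct: this is the standard Rabinowitsch trick, the degenerate cases are handled properly, and the substitution $y_0 \mapsto \mc L^{-1}$ followed by clearing denominators is carried out cleanly.

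Note, however, that the paper does not actually prove this proposition. It is quoted verbatim as a corollary from Shafarevich (page 282) and used as a black box inside the proof of Lemma~\ref{redu1}; the only remark the authors make is that it is ``a corollary to Hilbert's Nullstellensatz.'' So your write-up supplies strictly more than the paper does. Since the paper simply cites the statement, there is no approach to compare against --- your Rabinowitsch derivation is a perfectly standard way to recover exactly the form stated, and it correctly specializes the general strong Nullstellensatz (which gives $\mc L^m$ in the ideal generated by several $f_i$'s) to the single-generator case $\mc L^m = k f_0$ appearing here.
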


Say that $\mc L^m = k f_0$ for some $k \in \Qbar[y_1,\ldots,y_{j+1}]$.  Since
$\mc L^m, f_0 \in \Zr$, we have that $k$ is in $\Q(y_1,\ldots,y_{j+1})$ (the
fraction field of $\Zr$).  Thus, $k$ is in the ring $\Q[y_1,\ldots,y_{j+1}]$,
and so there is a positive integer $c$ such that $ck \in \Zr$.  We now have
$c\mc L^m = (ck) f_0 $.
%in $\Zr$.
Since $f_0$ is irreducible in $\Z[y_1,y_2,\ldots,y_{j+1}]$, we must have that
$f_0$ divides $\mc L$ ($f_0$ cannot divide the positive integer $c$ since
$f_0$ is either non-constant or zero).  But this is impossible since by
assumption, $\mc L$ is non-zero in the quotient ring $\Zr/f_0$.  Thus, for
every $m \ge 0$ and for every $k\in \Qbar[y_1,\ldots,y_{j+1}]$ we must have
that $\mc L^m \ne k f_0$.
%
%Since $f_0$ is irreducible in $\Z[y_1,y_2,\ldots,y_{j+1}]$, the ideal $(f_0)$
%is prime
%in $\Z[y_1,y_2,\ldots,y_{j+1}]$; thus, $\mc L^m \in (f_0)$ implies   $\mc L
%\in
%(f_0)$.  By assumption, $\mc L \notin (f_0)$; hence
Therefore, by the  contrapositive  of Proposition~\ref{nullstell}, there exist
algebraic numbers $q_1, \ldots, q_{j+1} \in \Qbar $ such that
%
%$f_0(q_1, \ldots, q_{j+1})=0$ while $\mc L(q_1, \ldots, q_{j+1}) \ne 0$.
%$f_0\Big|_{(q_1, \ldots, q_{j+1})}=0$ while $\mc L\Big|_{(q_1, \ldots, q_{j+1})} \ne 0$.
$f_0\Big|_{y_i=q_i}=0$ while $\mc L\Big|_{y_i=q_i} \ne 0$.
Thus, we have a homomorphism $$ \psi_0:
\Z[y_1,y_2,\ldots,y_{j+1}]/f_0 \to \Z[q_1, \ldots, q_{j+1}],$$ defined by $y_i
\mapsto  q_i$, such that $ \psi_0( \mc L ) \ne 0.$

Applying the  primitive  element theorem and clearing denominators as before, we have
$$\Z[q_1, \ldots, q_{j+1}] \subset \Z[\theta_1],$$
with $\theta_1\in\Qbar$.
Combining the inclusions and isomorphisms from the applications of the primitive
element theorem with $\psi_0$ completes the proof of Lemma~\ref{redu1}.
\end{proof}

Recall the statement of Theorem~\ref{redu2}:
\setcounter{mirrorcount}{1}
\begin{mirrorprop}
Let $S$ be a finite subset of
a characteristic zero integral domain $D$, and let $L$ be a finite set of
non-zero elements in the subring $\Z[S]$ of $D$.  There exists an infinite
sequence of primes with positive relative density such that for each prime $p$
in the sequence, there is a ring homomorphism $\quotientmap: \Z[S]\to\Zmodp$
satisfying $0\notin \quotientmap(L)$.
\end{mirrorprop}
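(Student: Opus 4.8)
The plan is to combine Lemma~\ref{redu1} with the Frobenius Density Theorem (Theorem~\ref{frob}). Lemma~\ref{redu1} already hands us a number field element $\theta$, algebraic over $\Q$, and a ring homomorphism $\phi\colon \Z[S]\to\Z[\theta]$ with $0\notin\phi(L)$. Since $L$ is finite and $\phi(L)$ consists of nonzero elements of the integral domain $\Z[\theta]$, the product $\mu:=\prod_{\ell\in L}\phi(\ell)$ is a nonzero element of $\Z[\theta]$. So it suffices to produce infinitely many primes $p$, of positive relative density, together with a ring homomorphism $\Z[\theta]\to\Zmodp$ that does not kill $\mu$; composing with $\phi$ then finishes the proof.

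The remaining task is therefore a statement purely about a fixed number ring $\Z[\theta]$ and a fixed nonzero element $\mu$ in it. First I would fix a minimal polynomial: let $g(z)\in\Z[z]$ be the (primitive integer) minimal polynomial of $\theta$ over $\Q$, of degree $k$, with $k$ distinct complex roots, so $\Z[\theta]\isom\Z[z]/(g(z))$. A ring homomorphism $\Z[\theta]\to\Zmodp$ is then the same as a choice of root of $g$ modulo $p$, i.e. it exists precisely when $g$ has a linear factor mod $p$; and by the Frobenius Density Theorem the set of primes for which $g$ splits completely into distinct linear factors mod $p$ has relative density $1/|G|>0$, where $G$ is the Galois group of $g$. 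For all but finitely many such primes (those not dividing the discriminant of $g$ nor any ``bad'' denominator), each of the $k$ roots $\bar z_1,\dots,\bar z_k\in\Zmodp$ gives a genuine homomorphism $\psi_i\colon\Z[\theta]\to\Zmodp$ sending $\theta\mapsto\bar z_i$.

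Now I need one of these $\psi_i$ to avoid $0\notin\psi_i(L)$, equivalently $\psi_i(\mu)\neq0$. Write $\mu=h(\theta)$ for some $h(z)\in\Z[z]$ (we may clear denominators: if $\theta$ is not an algebraic integer, replace $\Z[\theta]$ by $\Z[z]/(g)$ via the chosen presentation and carry a fixed denominator $c$, then ask only for primes $p\nmid c$). Since $\mu\neq0$ in $\Z[\theta]$, the polynomial $h$ is not divisible by $g$ in $\Q[z]$, so $\gcd(g,h)=1$ in $\Q[z]$ and there exist $a,b\in\Z[z]$ and a nonzero integer $r$ with $ag+bh=r$. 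For any prime $p\nmid r$ (and $p\nmid c$, $p\nmid\operatorname{disc}(g)$) and any root $\bar z_i$ of $g$ mod $p$, reducing $ag+bh=r$ at $z=\bar z_i$ gives $b(\bar z_i)h(\bar z_i)\equiv r\not\equiv0\pmod p$, hence $\psi_i(\mu)=h(\bar z_i)\neq0$. Thus, for every prime $p$ in the Frobenius density-$1/|G|$ set except the finitely many dividing $r\cdot c\cdot\operatorname{disc}(g)$ --- still a set of primes of positive relative density --- the homomorphism $\quotientmap:=\psi_1\circ\phi$ works.

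I expect the main obstacle to be bookkeeping rather than a deep difficulty: carefully tracking denominators so that ``a ring homomorphism $\Z[\theta]\to\Zmodp$ corresponds to a root of $g$ mod $p$'' is literally correct for the primes we keep (this is why we discard primes dividing the fixed denominator $c$ and $\operatorname{disc}(g)$), and making sure the finitely many excluded primes do not destroy positive relative density --- which they cannot, since removing finitely many primes from a set of relative density $1/|G|$ leaves relative density $1/|G|>0$. One should also note that invoking full splitting of $g$ (rather than just ``$g$ has some root mod $p$'') is a convenience: it makes all $k$ reductions $\psi_i$ available at once and makes the density statement cleanest via Theorem~\ref{frob}; the argument with $ag+bh=r$ then shows \emph{every} such $\psi_i$ already avoids $0$, so we do not even need to choose among the roots.
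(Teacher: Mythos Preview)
Your proposal is correct and follows the same two-step skeleton as the paper: use Lemma~\ref{redu1} to land in $\Z[\theta]\isom\Z[z]/(g)$, then use Theorem~\ref{frob} to find a positive-density set of primes at which $g$ splits completely and pass to $\Zmodp$ via a root of $g$.

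The one genuine difference is in how nonvanishing of $\mu=\prod_{\ell\in L}\phi(\ell)$ is guaranteed after reduction. You run a B\'ezout argument: from $\gcd(g,h)=1$ in $\Q[z]$ obtain $ag+bh=r$ with $r\in\Z\setminus\{0\}$, and then discard the finitely many primes dividing $r$. The paper instead applies Frobenius to the \emph{product} $g\cdot L_1$, where $L_1$ is the squarefree part of a lift of $\mu$; since $g$ and $L_1$ are coprime in $\Q[z]$, the product has distinct complex roots, and Frobenius then yields a positive-density set of primes at which $g\cdot L_1$ splits into \emph{distinct} linear factors mod $p$, so any root of $g$ mod $p$ is automatically not a root of $L_1$. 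The paper's trick packages the resultant computation into a single invocation of Frobenius; your version is more explicit about exactly which finitely many primes are thrown away, and makes transparent that \emph{every} root of $g$ mod $p$ works, not just one. Either route is fine.

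One cleanup: your hedging about a denominator $c$ and about $\operatorname{disc}(g)$ is unnecessary. Since $g\in\Z[z]$ is primitive and irreducible, Gauss's lemma gives $\ker(\Z[z]\to\Z[\theta])=(g)$ exactly, so $\Z[\theta]\isom\Z[z]/(g)$ on the nose and any root of $g$ mod $p$ already yields a genuine ring homomorphism $\Z[\theta]\to\Zmodp$. No denominators need to be cleared, and the discriminant plays no role once Frobenius has handed you distinct linear factors. Dropping that caveat streamlines the argument without changing its content.
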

The proof of Theorem~\ref{redu2} picks up where the proof of Lemma~\ref{redu1} left off.

\begin{proof}[Proof of Theorem~\ref{redu2}]
%Recall that we use $\Z[S]$ to denote the smallest subring of $D$ containing
%$S$.
%
By Lemma~\ref{redu1}, there exists a ring homomorphism $$\phi: \bb Z[S] \to
\Z[\theta_1] \isom \Z[z]/f_1,$$ such that $0\notin\phi(L)$, where $z$ is a
formal variable and $f_1$ is an irreducible element in $\Z[z]$ that gives zero
when evaluated at $z=\theta_1$.

Let $\widehat{L} := \prod_{\ell \in L} \ell$, let
$\widetilde{L}(z)\in \Z[z]$ denote the lowest-degree representative of
$\phi(\widehat L)$ in $\Z[z]/f_1$, and let $L_1(z)$ denote the product of all
distinct irreducible factors of $\widetilde{L}(z)$ in $\Z[z]$. Note that a
homomorphism of integral domains will map $\widetilde{L}(z)$ to zero if and
only if it maps $L_1(z)$ to zero.  By assumption, $\widetilde{L}(z)$ is
non-zero, so we must have that $f_1(z)$ does not divide $\widetilde{L}(z)$ in
$\Z[z]$; and thus $f_1(z)$ does not divide $L_1(z)$.
% Since $\Z[z]$ is a unique factorization domain.
Therefore, $L_1(z)$ has no roots (in $\bb C$, say) in common with $f_1(z)$,
since $f_1(z)$ is irreducible.

By Theorem~\ref{frob} (the Frobenius Density Theorem)
%(see \cite[page 32]{Stev})
there exists a sequence of primes $(p_1,p_2,p_3,\ldots)$ in $\Z$ (with positive
relative density)
%such that $p$ does not divide the lead coefficient of $f_1(z) \mc L_1(z)$ and
such that for any prime $p$ in the sequence, the polynomial $f_1(z) L_1(z)$
factors completely modulo $p$ into a product of $\deg\lt(f_1(z) L_1(z)\rt)$
distinct linear factors.

Let $(z-a)$ be a linear factor of $f_1(z)$ modulo $p$, where $p$ is any prime
in the sequence $(p_1, p_2, p_3,\ldots)$.
Since, modulo $p$, the linear factors of $f_1(z)$
are all distinct from those of $L_1(z)$, we know that $(z-a)$ does not
divide $L_1(z)$ modulo $p$.  Thus, for infinitely many primes $p$, we may
quotient out by $p$ and by $(z-a)$ to get a canonical quotient map
$$\psi_1: \Z[z]/f_1 \longrightarrow \Z[z]/(p,z-a) \isom \Zmodp $$
where $\psi_1(L_1(z)) \ne 0$.
One can think of $\psi_1$
as modding out by $p$ and then sending $z$ to the
element $a$ in $ \Zmodp$.

Letting $\qmap := \psi_1\circ \phi$
completes the proof.
\end{proof}

\section{Acknowledgments}

We would like to thank Ellen Eischen for providing numerous clarifications and
simplifications on an early version of Theorem~\ref{redu2} and its proof.
Thanks is also due to John Bryk, Wei Ho, Peter Stevenhagen, Terence Tao, and
J.B. Tunnell for useful comments.  Finally, we would like to thank the
anonymous referee for the many exceptionally careful and useful comments on
the manuscript, all of which have improved the paper.

\end{document}